\newcommand\set[1]{\left\{#1\right\}}
\newtheorem{theorem}{Theorem}[section]
\newtheorem{corollary}[theorem]{Corollary}
\newtheorem{proposition}[theorem]{Proposition}
\newtheorem{lemma}[theorem]{Lemma}
\numberwithin{equation}{section}
\def\Hn{H_n}
\def\cP{\mathcal P}
\def\cG{\mathcal G}
\def\cS{\mathcal S}
\def\cF{\mathcal F}
\def\cV{\mathcal V}
\def\cE{\mathcal E}
\def\C{\mathbb C}
\def\R{\mathbb R}
\def\N{\mathbb N}
\def\l{\lambda}
\def\a{\alpha}
\def\vp{\varphi}
\def\eps{\varepsilon}
\def \Un {{\text{U}(n)}}  
\def \RE {\text{\rm Re}\,}
\def \IM {\text{\rm Im}\,}
\def\algop{{\mathbb D}_K}
\def\schK{\cS_K(\Hn)}
\def\gelfandgen#1{{\cG}^{#1}}
\begin{document}

\title[Gelfand pairs on the Heisenberg group and Schwartz functions]
{Gelfand pairs on the Heisenberg group and Schwartz functions}

\author[F. Astengo, B. Di Blasio, F. Ricci]
{Francesca Astengo, Bianca Di Blasio, Fulvio Ricci}

\address
{Dipartimento di Matematica\\
Via Dodecaneso 35\\
16146 Genova\\ Italy} \email{astengo@dima.unige.it}

\address
{Dipartimento di Matematica e Applicazioni\\
Via Cozzi 53\\
  20125 Milano\\ Italy}
\email{bianca.diblasio@unimib.it}

\address
{Scuola Normale Superiore\\
Piazza dei Cavalieri 7\\M
56126 Pisa\\ Italy}
\email{fricci@sns.it}

\thanks{Work partially supported
by  MIUR  and GNAMPA
}

\subjclass[2000]{Primary: 43A80  
; Secondary:  22E25              
}                         

\keywords{Gelfand pair, Schwartz space, Heisenberg group}

\begin{abstract}
Let $\Hn$ be the $(2n+1)$--dimensional Heisenberg 
group and $K$ a compact group of automorphisms of $\Hn$
such that $(K\ltimes \Hn,K)$ is a Gelfand pair
. We prove that the Gelfand transform is a  topological isomorphism between the space of $K$--invariant Schwartz
 functions on $\Hn$ and the space of  
Schwartz function on a closed subset of $\R^s$
homeomorphic to the Gelfand spectrum of the Banach algebra
of $K$--invariant integrable functions on $\Hn$.
\end{abstract}

\maketitle

\section{Introduction}

A fundamental fact in harmonic analysis on $\R^n$
 is that the Fourier transform is a topological isomorphism
of the Schwartz space
 $\cS(\R^n)$ onto itself.

Various generalizations of this result for different classes of Lie groups exist in the literature, in particular in the context of Gelfand pairs, where the operator-valued Fourier transform can be replaced by the scalar-valued spherical transform. Most notable is the case of a symmetric pair of the noncompact type, with Harish-Chandra's definition of a bi-$K$--invariant Schwartz space on the isometry group (cf. \cite[p. 489]{He2}).

The definition of a Schwartz space on a Lie group becomes quite natural on a  nilpotent group $N$ (say connected and simply connected).
In that case one can define  the Schwartz space 
 by identifying $N$ with its Lie algebra via the 
exponential map.

The image of the Schwartz space on the Heisenberg group $\Hn$ under the group Fourier transform has been described by D. Geller \cite{G}. 
Let $K$ be a compact group of automorphisms of $\Hn$ such that convolution of  $K$--invariant functions is commutative, in other words assume that $(K\ltimes\Hn,K)$ is a Gelfand pair
. 
Then a scalar-valued spherical transform $\cG_K$ (where $\cG$ stands for ``Gelfand transform'') of $K$--invariant functions is available, 
and Geller's result can be translated into a characterization of the image under $\cG_K$ of
the space $\cS_K(\Hn)$ of $K$--invariant Schwartz functions. 
 In the same spirit, a 
characterization of $\cG_K\big(\cS_K(\Hn)\big)$ is given 
in~\cite{BJR} for   closed subgroups $K$ of the unitary group $\Un$.

In \cite{ADR} we have proved that, for $K$ equal to $\Un$ or $\mathbb T^n$ (i.e. for radial - resp. polyradial - functions), an 
analytically more significant description of $\cG_K\big(\cS_K(\Hn)\big)$ can be obtained by making use of natural homeomorphic embeddings of the Gelfand spectrum of $L^1_K(\Hn)$ in Euclidean space. The result is that $\cG_K\big(\cS_K(\Hn)\big)$ is the space of restrictions to the Gelfand spectrum of the Schwartz functions on the ambient space.
This condition of ``extendibility to a Schwartz function on the ambient space'' subsumes the rather technical condition on iterated differences in discrete parameters that are present in the previous characterizations.

In this article we extend the result of \cite{ADR} to general Gelfand pairs $(K\ltimes\Hn,K)$, with $K$ a compact group of automorphisms of $\Hn$. Some preliminary notions and facts are required before we can give a precise formulation of our main theorem.

Let $G$ be a connected Lie group and $K$
a compact subgroup thereof
such that $(G,K)$ is a Gelfand pair
and denote by 
$L^1(G//K)$ the convolution algebra of all bi-$K$--invariant 
integrable functions on $G$
.
The Gelfand spectrum of the 
commutative Banach algebra $L^1(G//K)$
may be identified with the set  of bounded 
spherical functions with the compact-open topology.  
Spherical 
functions are characterized as the joint eigenfunctions of all 
$G$--invariant differential operators on 
$G/K$, normalized in the $L^\infty$-norm. 
$G$--invariant differential operators on 
$G/K$ form a
commutative algebra $\mathbb D(G/K)$ which  is finitely generated~\cite{He}.

Given a finite set of generators 
$\set{V_1,\ldots,V_s}$
of $\mathbb D(G/K)$, we can  assign
 to each bounded spherical function $\phi$ the $s$-tuple 
 $\widehat V(\phi)=\left(\widehat{V_1}(\phi),\ldots,\widehat{V_s}(\phi)\right)$ of its eigenvalues
with respect to these generators. In this way, the Gelfand spectrum
 is identified with a closed subset $\Sigma_K^V$ of $\C^s$. When all bounded spherical functions are of positive type and the operators $V_j$ self-adjoint, $\Sigma_K^V\subset\R^s$. 
As proved in~\cite{FR},  the Euclidean topology induced on $\Sigma_K^V$ coincides with the compact-open topology on the set of bounded spherical functions (see also~\cite{BJRW} for $G=K\ltimes\Hn$ and $K\subset \Un$). When the Gelfand spectrum is identified with $\Sigma_K^V$, the spherical transform will be denoted by~$\gelfandgen{V}_K$.

Let $K$ be a compact group of automorphisms of $\Hn$ such that $(K\ltimes \Hn,K)$ is a Gelfand pair
and denote by $\algop$
the commutative algebra  of left-invariant and $K$--invariant  differential operators
on $\Hn$. Let $V=\set{V_1,\ldots,V_s}$ be a set of formally 
self-adjoint 
generators of  $\algop$.
We denote by $\cS(\Sigma_K^V)$ the space of
restrictions to $\Sigma_K^V$ of Schwartz functions on $\R^s$, endowed with the
quotient topology of ${\cS}(\R^s)/\{f:f_{|_{\Sigma_K^V}}=0\}$.

Our main result is the following:

\begin{theorem}\label{main}
The map $\gelfandgen{V}_K$ is a topological isomorphism between ${\cS}_K(\Hn)$ and 
${\cS}(\Sigma_K^V )$.
\end{theorem}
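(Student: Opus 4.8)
The plan is to exploit that both $\schK$ and ${\cS}(\Sigma_K^V)$ are Fréchet spaces — the latter as a quotient of ${\cS}(\R^s)$ by a closed subspace — so that, by the open mapping theorem, it suffices to prove that $\gelfandgen{V}_K$ is a continuous linear \emph{bijection}; continuity of the inverse is then automatic. Injectivity is immediate, since the Gelfand transform is already injective on $L^1_K(\Hn)$. The structural backbone of the argument is the intertwining identity $\gelfandgen{V}_K(V_j f)(\xi)=\xi_j\,\gelfandgen{V}_K(f)(\xi)$, valid for every $f\in\schK$ and every generator $V_j\in\algop$: since each $V_j$ maps $\schK$ continuously into itself and the spherical functions are joint eigenfunctions with $V_j$-eigenvalue $\xi_j$, iterating this identity shows that multiplication by any polynomial on $\R^s$ is controlled by the Schwartz seminorms of $f$, which yields the rapid decay of $\gelfandgen{V}_K f$ along $\Sigma_K^V$.

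The forward direction — that $\gelfandgen{V}_K f$ is the restriction of a genuine Schwartz function on $\R^s$ — is the subtle half. Here I would start from the known description of $\cG_K\big(\schK\big)$ coming from Geller's theorem \cite{G} and from \cite{BJR}, which characterizes the image in terms of smoothness in the continuous central parameter $\l$ together with a system of difference conditions in the discrete spectral indices. The task is then one of translation: to upgrade ``smooth in $\l$ plus difference conditions'' into the single assertion ``extends to a Schwartz function on $\R^s$''. I would realize each partial derivative $\partial_{\xi_j}$ on the spectrum by a dual family of operations on $\Hn$ — multiplication by suitable $K$-invariant polynomials, the counterparts of the operators $V_j$ — whose Gelfand transforms implement differentiation in the $\xi$ variables; the Schwartz bounds for these operations on $\schK$ then bound all the $\xi$-derivatives of the candidate extension. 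The compatibility of these derivatives across the different sheets of $\Sigma_K^V$ is exactly what the difference conditions encode, so a Whitney-type extension assembles a global Schwartz function on $\R^s$.

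For surjectivity I would run the construction backwards: given $g\in{\cS}(\R^s)$, define $f$ through the inverse Gelfand (Plancherel) expansion associated with the spectral measure of $\Sigma_K^V$, and verify $f\in\schK$ by transporting the two module structures. Polynomial growth of the coordinates is converted, via the relations $V_j\leftrightarrow\xi_j$, into decay of $f$ controlled by derivatives of $g$, while the dual multiplication operators convert $\xi$-derivatives of $g$ into the control needed for the remaining Schwartz seminorms of $f$. Since both directions are built from the same finite list of intertwining relations, the estimates obtained are two-sided, which already establishes continuity of $\gelfandgen{V}_K$; combined with injectivity and surjectivity, the open mapping theorem closes the argument.

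The main obstacle I expect is precisely the gluing across sheets in the forward direction. The spectrum $\Sigma_K^V$ is a union of a continuum of strata indexed by the $K$-types occurring in the Fock decomposition of each $\pi_\l$, and these strata accumulate onto the $\l=0$ locus; proving that the transform extends to an honestly $C^\infty$ — indeed Schwartz — function on the ambient $\R^s$, rather than one that is merely smooth along each stratum with formally matching jets, requires uniform control of the joint behaviour in the discrete and continuous parameters near the degenerate points. This is the step where the full force of the Gelfand-pair hypothesis on $(K\ltimes\Hn,K)$ — and hence the rigid representation-theoretic shape of $\Sigma_K^V$ — must be used, and it is the natural generalization of the delicate analysis of Laguerre coefficients carried out for $K=\Un$ and $K=\mathbb T^n$ in \cite{ADR}.
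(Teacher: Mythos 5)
Your overall frame (Fr\'echet spaces, open mapping theorem, reduce to a continuous linear bijection) is legitimate soft analysis, and your forward direction is in the spirit of what the paper actually does (Geller's development, Whitney extension, compatibility across sheets). But there are two genuine gaps. The more serious one is surjectivity. You treat it as the forward argument ``run backwards,'' asserting that the inverse Plancherel expansion of $g\in\cS(\R^s)$ lands in $\cS_K(\Hn)$ ``by transporting the two module structures,'' with the relations $V_j\leftrightarrow\xi_j$ converting derivatives of $g$ into decay of $f$ and vice versa. There is no such symmetry: on the group side there is no clean operation dual to $\partial_{\xi_j}$ (the coordinates $\xi_j$ are \emph{discrete} on every sheet $\l\neq0$ of $\Sigma_K^V$, which is precisely why the earlier characterizations involve finite-difference conditions rather than derivatives), and controlling the Schwartz seminorms $(1+|x|)^p|X^If(x)|$ of the inverse transform is exactly the content of Hulanicki's functional-calculus theorem for Rockland operators, which the paper must extend to a commuting family $V_0,\dots,V_d$ (Theorems~\ref{hula} and~\ref{Veneruso}, Lemma~\ref{verificaRockland}, Corollary~\ref{corveneruso}). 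This is a deep input, not a formal consequence of the intertwining identities; without it (or an equivalent), your surjectivity step, and hence the bijectivity needed for the open mapping theorem, is unsupported.

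The second gap is in the mechanism you propose for the forward direction: ``realize each $\partial_{\xi_j}$ by multiplication by suitable $K$--invariant polynomials whose Gelfand transforms implement differentiation in $\xi$.'' Again, this cannot work as stated, for the same reason: on $\Sigma'$ the variables $\xi_j$ take values in a dilated copy of a lattice, so no operation on $f$ produces $\partial_{\xi_j}$ of anything; derivatives of the extension must instead be \emph{built} by an extension procedure. The paper does this in three steps whose ingredients are absent from your sketch: (i) a choice of generators with \emph{integer} eigenvalues $\widehat V_j(\phi_\alpha)$ (Lemma~\ref{autinteri}, resting on the rationality results of \cite{BR98,BR00} --- the paper flags this as crucial, and it is what makes the sheets uniformly separated so that the explicit interpolation operator $E$ of Proposition~\ref{estensioneCk} with its cutoffs is well defined); (ii) a Schwartz-space version of the Schwarz--Mather theorem (Theorem~\ref{Schwarz-Mather}) to extend $\widehat f$ off the degenerate stratum $\Sigma_0=\{0\}\times\rho(\C^n)$, which is a positive-dimensional semi-algebraic set, not a point; and (iii) quantitative Whitney extension of the jet given by Geller's coefficients (Lemma~\ref{lemma4.1}, Proposition~\ref{whitney}), with norm control that is deliberately weaker than linear-and-continuous (the paper explicitly does not claim a single extension $F$ with all norms controlled, nor linear dependence on $f$). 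You correctly identify the accumulation of the sheets onto $\l=0$ as the hard point, but the tools you name do not yet bridge it.
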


\smallskip
As customary, for us $\Hn$ is understood as $\R\times\C^n$, with canonical coordinates of the first kind. It is well known that, under the
action 
$$
k\cdot(t,z)=(t,kz)\qquad
\forall k\in \Un, \  (t,z)\in \Hn,
$$
 $\Un$ is a maximal compact
connected  group of automorphisms of $\Hn$, and
that every compact connected group of automorphisms of $\Hn$ is conjugated to 
a subgroup of  $\Un$.
Therefore, if $K$ is a compact group of automorphisms of $\Hn$,
then its identity  component in $K$ is conjugated to 
a subgroup of  $\Un$.
For most of this article, we deal with the 
case of $K$ connected  and contained 
in  $\Un$, leaving 
the discussion of the general case to the last section.

In Section~\ref{mainresult} we
show that it suffices to prove  Theorem~\ref{main}
 for one 
particular set  of generators of $\algop$, and 
in Section~\ref{sceltagen} we choose a convenient set of generators.
From a homogeneous Hilbert basis of $K$--invariant  
polynomials on $\R^{2n}$ we derive by symmetrization $d$ differential  
operators $V_1,\dots,V_d$, invariant under $K$; to these we add the  
central operator $V_0=i^{-1}\partial_t$, obtaining in this way a  
generating system of $d+1$ homogeneous operators.
Here we benefit from the deep study
 of the algebraic properties of the multiplicity-free actions of subgroups of $\Un$, developed in~\cite{BJR90, BJR92, BJR, BJRW, BR98, BR00}. In particular, rationality of the ``generalized binomial coefficients'' is  a crucial point in our argument (see formula~\ref{estensioneCk} below). It must be noticed that the proof of rationality in \cite{BR00} is based on the actual classification of multiplicity-free actions. In this respect, our proof 
 depends on the actual classification of the groups $K$ giving rise to Gelfand pairs.

After these preliminaries, we split the proof of Theorem~\ref{main} into two parts. 

In the
first part we show  that, if $m$ is a Schwartz function on $\R^{d+1}$, its restriction to $\Sigma_K^V$ is the
 Gelfand transform of a  function $f$ in $\cS_K(\Hn)$ (see Theorem~\ref{main1} below). The argument is based on Hulanicki's theorem \cite{H}, stating that Schwartz functions on the real line operate on positive  Rockland operators
on  graded nilpotent Lie groups producing convolution operators with Schwartz kernels. We adapt the argument in~\cite{V} to obtain a multivariate extension of 
Hulanicki's theorem (see Theorem~\ref{Veneruso}
below).

In the second part  we prove that the Gelfand transform  $\gelfandgen{V}_K f$
of a function  $f$ in $\cS_K(\Hn)$
 can be extended to a Schwartz function on $\R^{d+1}$ (see Theorem~\ref{main2} below). 
The proof begins with an extension to the Schwartz space 
 of the Schwarz--Mather theorem \cite{Ma, S}  for $C^\infty$ $K$--invariant functions (see Theorem~\ref{Schwarz-Mather} below). This allows us to extend   to a Schwartz function on $\R^d$ the restriction of $\gelfandgen{V}_K f$
 to the  ``degenerate part'' $\Sigma_0$ of the Gelfand spectrum (that corresponding to the one-dimensional representations of $\Hn$, or equivalently corresponding to the eigenvalue
 $0$ for $V_0$)%
.
 Then we associate to $f$ a Schwartz jet on 
 $\Sigma_0$.
  As in \cite{ADR}, the key tool here is the existence of ``Taylor coefficients'' 
  at points of $\Sigma_0$, proved by Geller~\cite{G} (see Theorem~\ref{geller} below). 
The Whitney extension theorem
 (adapted to Schwartz jets in Proposition~\ref{whitney}) gives therefore a Schwartz extension to $\R^{d+1}$ of the jet associated to $f$. To conclude the proof, it remains to prove that if $f\in\cS_K(\Hn)$ and the associated jet on $\Sigma_0$ is trivial, then $\cG_K^Vf$ admits a Schwartz extension. This is done by adapting an explicit interpolation formula already used in \cite{ADR} (see Proposition~\ref{estensioneCk}).
For a nonconnected group  
$K$, we remark that, calling $K_0$ the connected component of the  
identity, one can view the $K$-Gelfand spectrum as the quotient of  
the $K_0$-Gelfand spectrum under the action of the finite group $F=K/ 
K_0$ (it is known that if $(K\ltimes \Hn,K)$  is a Gelfand pair, so is $ (K_0\ltimes \Hn,K_0)$, cf.\cite{BJLR}). Starting from an $F$--invariant generating  
system of $K_0$--invariant differential operators, the $K$-Gelfand  
spectrum is then conveniently embedded in a Euclidean space by means  
of the Hilbert map associated to the action of $F$ on the linear span  
of these generators.

The paper is structured as follows.
In Section~\ref{preliminari} we recall some facts
about Gelfand pairs $(K\ltimes \Hn,K)$ and the associated 
Gelfand transform. 
In Section~\ref{mainresult} we
show that it suffices to prove  Theorem~\ref{main}
 for one 
particular set  of generators of $\algop$.
In Section~\ref{sceltagen} we choose a convenient set of generators
in the case where $K$ is a connected closed subgroup of $\Un$.
 In Section~\ref{FunctionalCalculus}
we show that every  function in $\cS\bigl(\R^{d+1}\bigr)$ gives rise to the Gelfand transform of a 
function  in $\schK$  via
functional calculus.
In Section~\ref{Mather} we extend the Schwarz-Mather theorem \cite{Ma, S}   to Schwartz spaces.
Section~\ref{viceversa} is devoted to 
define a Schwartz extension on  $\R^{d+1}$ of the Gelfand 
transform   of a 
function  in $\schK$.
In Section~\ref{Kgenerico} we show that our result holds
for all compact groups of automorphisms of $\Hn$.
%


\section{Preliminaries}\label{preliminari}

For the content of this section we refer
to~\cite{BJR90, BJR92, BJRW, C, FR, Kac}.

\subsection{The Heisenberg group and its representations}

We denote by $\Hn$ the Heisenberg group, i.e.,
the real manifold $\R\times\C^n$ equipped with the group law
$$
(t,z)(u,w)=\bigl(t+u+\tfrac12\IM w\cdot \overline{z} ,z+w\bigr)
\qquad t,u\in \R,\quad
\forall z,w\in \C^n,
$$
where $w\cdot \overline{z}$ is a short-hand writing for $\sum_{j=1}^n w_j\, \overline{z_j}$.
It is easy to check that Lebesgue measure $dt\,dz$ is a Haar measure on $\Hn$.

Denote by  $Z_j$ and $\bar Z_j$ the complex left-invariant vector
fields
$$
Z_j=\partial_{z_j}-{\textstyle{i\over4}}\, \bar{z}_j\,\partial_t
\qquad
\bar{Z}_j=\partial_{\bar{z}_j}+{\textstyle{i\over4}}\, z_j\,\partial_t ,
$$
and set $T= \partial_t$.


\medskip

For $\lambda>0$, denote by ${\cF}_\lambda$ the Fock space consisting of
the entire functions $F$ on  $\C^n$ such that 
$$
\|F\|_{\mathcal F_\lambda}^2= \left({\lambda\over2\pi}\right)^n\,\int_{\C^n} |F(z)|^2\,   
\,e^{-\frac{\lambda}{2} |z|^2}\,dz <\infty\ ,
$$
equipped with the norm $\|\cdot\|_{\mathcal F_\lambda}$.
Then $H_n$ acts on $\cF_\lambda$ through the unitary representation $\pi_\l$ defined by
$$
[\pi_\lambda(t,z)F](w)=e^{i\lambda t}\, 
           e^{-\frac{\lambda}{2} w\cdot\bar z-\frac{\lambda}{4} |z|^2}\, F(w+z)
\qquad \forall (z,t)\in \Hn,\!\!\quad F\in  {\cF}_{\lambda},\!\!\quad w\in \C^n \ ,
$$
and through its contragredient
$\pi_{-\lambda}(t,z)=\pi_{\lambda}(-t, \bar z)$. These are the Bargmann representations of $H_n$.

The space  $\cP(\C^n)$ of polynomials on $\C^n$ is dense in ${\cF}_\lambda$ 
($\lambda>0$)
and an orthonormal basis of 
$\cP(\C^n)$ seen as a subspace of  ${\cF}_\lambda$
is given by the monomials
$$
p_{\lambda,{\bf d}}(w)={w^{{\bf d}}
\over ((2/\lambda)^{|{{\bf d}}|}{{\bf d}}!)^{1/2}} 
\qquad{{\bf d}}\in \N^n.
$$
 
 \medskip
 
 Besides the Bargmann representations, $H_n$ has the one-dimensional representations
 $$
\tau_w(t,z)=e^{i\RE z\cdot\bar w}
$$
with $w\in \C^n$. The $\pi_\lambda$ ($\lambda\ne0$) and the $\tau_w$ fill up the unitary dual of $H_n$.

\subsection{Gelfand pairs $(K\ltimes \Hn,K)$}
Let $K$ be a compact groups of automorphisms of $\Hn$ such that the convolution algebra 
$L^1_K(\Hn)$ of integrable $K$--invariant functions on $\Hn$
 is abelian, i.e., such that $(K\ltimes \Hn,K)$ is a Gelfand pair. 
 It is known~\cite{BJLR} that this property holds for $K$ 
 if and only if it holds for its connected identity component $K_0$. 
 On the other hand, every compact, connected group of automorphisms of $\Hn$ is conjugate, modulo an automorphism, to a subgroup of $\Un$, acting on $\Hn$ via 
 $$
k\cdot (t,z)=(t,kz) \qquad \forall (t,z)\in \Hn,\quad
k\in \Un.
$$

 For the remainder of this section, we assume that $K$ is connected, contained in $\Un$,
 and $(K\ltimes \Hn,K)$
is a Gelfand pair.



For the one-dimensional representation $\tau_w$, we have $\tau_w(t,k^{-1} z)=\tau_{kw}(t,z)$. Therefore, $\tau_w(f)$ is a $K$--invariant function of $w$ for every $f\in L^1_K(H_n)$.

As to the Bargmann representations, if $k\in \Un$, $\pi_{\pm\lambda}^k(t,z)=\pi_{\pm\lambda}(t,k  z)$ is equivalent to $\pi_{\pm\lambda}$ for every $\lambda>0$ and every choice of the $\pm$ sign. Precisely, we set
$$
\nu_+(k)F(z)=F(k^{-1}z)\ ,\qquad
\nu_-(k)F(z)=F(\bar k^{-1}z)\ ,
$$
each of the two actions being the contragredient of the other. We then have
$$
\pi_\l(kz,t)=\nu_+(k)\pi_\l(z,t)\nu_+(k)^{-1}\ ,\qquad \pi_{-\l}(kz,t)=\nu_-(k)\pi_{-\l}(z,t)\nu_-(k)^{-1}\ .
$$

By homogeneity, the decomposition of $\cF_\l$ into irreducible invariant subspaces under $\nu_+$ (resp. $\nu_-$) is independent of $\l$ and can be reduced to the decomposition of the dense subspace $\cP(\C^n)$ of polynomials.

It is known since \cite{C,BJR90} that $(K\ltimes \Hn,K)$ is a Gelfand pair if and only if
$\nu_+$ (equivalently $\nu_-$) decomposes into irreducibles without multiplicities (in other words, if and only if it is {\it multiplicity free}).
The subgroups of $\Un$ giving multiplicity free actions on $\cP(\C^n)$ have been classified by Ka\v{c}~\cite{Kac} 
and the resulting Gelfand pairs $(K\ltimes \Hn,K)$ are listed in~\cite[tables 1 and 2]{BR00}.

Under these assumptions, the space $\cP(\C^n)$ of polynomials on $\C^n$ 
decomposes into $\nu_+$-irreducible subspaces,
$$
\cP(\C^n)=\sum_{\alpha\in \Lambda}P_\alpha\ ,
$$
where 
$\Lambda$ is an infinite subset of the unitary dual $\widehat K$ of $K$ and $\alpha$ denotes the equivalence class of the action on $P_{\alpha}$. The irreducible $\nu_-$--invariant subspaces of $\cP(\C^n)$ are 
$$
P_{\alpha'}=\{\bar p(z)=\overline{p(\bar z)}:p\in P_\alpha\}\ ,
$$
with the action of $K$ on $P_{\alpha'}$ being equivalent to the contragredient $\alpha'$ of $\alpha$.

On the other hand, $\cP(\C^n)=\sum_{m\in \N}\cP_m(\C^n)$,
where $\cP_m(\C^n)$ is the space of homogeneous
polynomials of degree $m$.
Since $\nu_\pm$ preserves each $\cP_m(\C^n)$,
each $P_\alpha$ is contained in
$\cP_m(\C^n)$ for some $m$. We then say that $|\alpha|=m$, so that
\begin{equation*} 
\cP_m(\C)=\sum_{|\alpha|=m}P_{\alpha}=\sum_{|\alpha|=m}P_{\alpha'}.
\end{equation*}

%


As proved in \cite{BJR90}, all the bounded spherical functions are of positive type. Therefore
there are two families of spherical functions. Those of
the first family are
$$
\eta_{Kw}(t,z)=\int_Ke^{i{\rm Re}\langle z,kw\rangle}\,dk,
\qquad w\in \C^n,
$$
parametrized by $K$--orbits in $\C^n$ and associated with
the one-dimensional representations of the Heisenberg group.

The elements of the second family are parametrized by pairs $(\l,\alpha)\in \R^*\times\Lambda$. If $\l>0$ and $\{v_1^\l,\ldots,v_{\text{dim} (P_\alpha)}^\l\}$ is an orthonormal basis
of $P_\alpha$ in the norm of ${\cF}_\l$, we have the spherical function 
\begin{equation}\label{sferiche}
\phi_{\l,\alpha}(t,z)=\frac{1}{\text{dim} (P_\alpha)}
\sum_{j=1}^{\text{dim} (P_\alpha)}\langle \pi_\l (t,z)v_j^\l\, ,v_j^\l\rangle_{\cF_\l}\ .
\end{equation}

Taking, as we can, $v_j^\l$ as $\l^{|\alpha|/2}v_j^1$, we find that
$$
\phi_{\l,\alpha}(z,t)=\phi_{1,\alpha}(\sqrt\l z,\l t)\ .
$$

For $\l<0$, the analogous matrix entries of the contragredient representation give the spherical functions
$$
\phi_{\l,\alpha}(z,t)=\overline{\phi_{-\l,\alpha}(z, t)}\ .
$$

Setting for simplicity $\phi_\alpha=\phi_{1,\alpha}$, 
\begin{equation*} 
\phi_{\alpha}(z,t)
=e^{it}\,q_\alpha(z,\bar z)\,e^{-|z|^2/4},
\end{equation*} 
where $q_\alpha\in \cP(\C^n)\otimes\overline{\cP(\C^n)}$ is a real $K$--invariant polynomial
 of degree $2|\alpha|$ in $z$ and $\bar z$ (cf.~\cite{BJR92}).

Denote by $\algop$ the algebra of left-invariant and
$K$--invariant differential operators on $\Hn$. The symmetrization map establishes a linear bijection from the space of $K$--invariant elements in the symmetric algebra over $\frak h_n$ to $\algop$. Therefore every element $D\in\algop$ can be expressed as
 $\sum_{j=0}^m D_jT^j$, where $D_j$ is the symmetrization of a $K$--invariant polynomial
  in $Z,\bar Z$.

Let $D$ be the symmetrization of the $K$--invariant polynomial $P(Z,\bar Z,T)$. With  $\widehat D(\phi)$ denoting the eigenvalue of $D\in\algop$
on the  spherical function $\phi$, we have
\begin{equation}\label{autoveta}
\widehat {D}(\eta_{Kw})=P(w,\bar w,0)
\end{equation}
for the spherical functions associated to the one-dimensional representations

For $\l\ne0$, $d\pi_\lambda(D)$
commutes with the action of $K$ and therefore it preserves  
each $P_\alpha$. By 
Schur's lemma,
$d\pi_\lambda(D)_{|_{P_\alpha}}$ is a scalar operator $c_{\lambda,\alpha}(D)\,I_{{P_\alpha}}$. It follows from (\ref{sferiche}) that 
$$
\widehat {D}(\phi_{\lambda,\alpha})=c_{\lambda,\alpha}(D)\ .
$$

In particular, for $D=T$, we have
$$
\widehat T(\eta_{Kw})=0\ ,\qquad \widehat T\big(\phi_{\l,\alpha}\big)=i\l \ .
$$
%

\section{Embeddings of the Gelfand spectrum}\label{mainresult}


%

Let $K$ be a compact group of automorphisms of $\Hn$
such that  $(K\ltimes \Hn,K)$
is a Gelfand pair.
The Gelfand spectrum of 
the commutative Banach algebra $L^1_K(\Hn)$ is  the set of
bounded  spherical functions endowed with the compact-open topology. 
Given a set
 $V=\{V_0,\,\,V_1,\ldots,V_d\}$  of formally self-adjoint generators of $\algop$,  we assign
 to each spherical function $\phi$ the $(d+1)$-tuple 
 $\widehat V(\phi)=\left(\widehat{V_0}(\phi),\,\,\widehat{V_1}(\phi),\ldots,\widehat{V_d}(\phi)\right)$. 
 Since $d\pi(V_j)$ is formally self-adjoint for every irreducible
 representation $\pi$ , $\widehat V(\phi)$  is in~$\R^{d+1}$.
It has been proved,  in a more general
context~\cite{FR}, that $\Sigma_K^V=\{\widehat V(\phi):\phi\text{ spherical}\}$ is closed in $\R^{d+1}$ and homeomorphic to the Gelfand spectrum via
$\widehat V$ (see also~\cite{BJRW}).
Once we have identified the Gelfand spectrum with $\Sigma_K^V$,
the Gelfand transform of a function $f$ in $L^1_K(\Hn)$
can be defined on the closed subset $\Sigma_K^V$ of $\R^{d+1}$ as
$$
(\gelfandgen{V}_K f)\,(\widehat V(\phi))=\int_{\Hn} f\,
\phi.
$$

In order to prove Theorem~\ref{main}, we first show that different choices of the generating system $V$ give rise to natural isomorphisms among the corresponding restricted Schwartz spaces $\cS(\Sigma_K^V)$. It will then suffice to prove  Theorem~\ref{main} for one 
particular set  of generators.

On the Schwartz space ${\cS}(\R^m)$ we consider  the following family of
norms, parametrized  by a nonnegative integer  $p$:
$$
\|f\|_{(p,\R^m)}=\sup_{y\in \R^m,|\alpha|\leq p}
(1+|y|)^p |\partial^\alpha f(y)|.
$$

\begin{lemma}\label{mappapropria2}
Let $E$ and $F$ be closed subsets of
$\R^n$ and $\R^m$ respectively. 
Let $P:\R^n\to\R^m$ and $Q:\R^m\to\R^n$ be  polynomial maps such that  $P(E)=F$ and $Q\circ P$ is the identity on~$E$.
Given $f$ in $\cS(F)$ we let $P^\flat f=f\circ P|_E$.
Then $P^\flat$ maps $\cS(F)$ in $\cS(E)$ continuously.
\end{lemma}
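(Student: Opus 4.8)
The plan is to produce, for each $f\in\cS(F)$ and each choice of Schwartz representative, an explicit global extension of $P^\flat f$ to a Schwartz function on $\R^n$, and to arrange the construction to be linear and quantitatively controlled, so that the statement for the quotient topologies follows at once. No appeal to the later (Whitney/Schwarz--Mather) machinery is needed.

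First I would record the elementary algebraic and growth facts. From $P(E)=F$ and $(Q\circ P)|_E=\mathrm{id}$ it follows that $P$ and $Q$ restrict to mutually inverse bijections between $E$ and $F$. Set $\rho(x)=x-Q(P(x))$, a polynomial map $\R^n\to\R^n$ that vanishes on $E$. The key quantitative input is the global polynomial bound $|Q(y)|\le C(1+|y|)^{d}$, $d=\deg Q$, which together with $x=Q(P(x))+\rho(x)$ gives, for every $x\in\R^n$,
$$
1+|x|\le C'\,\max\bigl((1+|P(x)|)^{d},\ 1+|\rho(x)|\bigr).
$$
In words: for large $|x|$, either $|P(x)|$ is large (polynomially in $|x|$) or $|\rho(x)|$ is large.

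Next, given $f\in\cS(F)$, I would choose $g\in\cS(\R^m)$ with $g|_F=f$ and set
$$
h(x)=g(P(x))\,e^{-|\rho(x)|^{2}}.
$$
Since $\rho$ vanishes on $E$ and $P(E)\subseteq F$, on $E$ one has $h=g\circ P=f\circ P=P^\flat f$, so $h$ extends $P^\flat f$ to $\R^n$; everything reduces to showing $h\in\cS(\R^n)$ with seminorms controlled by those of $g$. This seminorm estimate is the only real obstacle, and it is where the dichotomy does the work. Differentiating, $\partial^\alpha h$ is a finite sum of terms $(\partial^\gamma g)(P(x))\,p(x)\,e^{-|\rho(x)|^{2}}$ with $|\gamma|\le|\alpha|$ and $p$ a polynomial, whence, for every $N$,
$$
|\partial^\alpha h(x)|\le C\,\|g\|_{(N,\R^m)}\,(1+|x|)^{a}\,(1+|P(x)|)^{-N}\,e^{-|\rho(x)|^{2}},
$$
with $a=a(\alpha)$, using $|\partial^\gamma g(y)|\le\|g\|_{(N,\R^m)}(1+|y|)^{-N}$. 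Now I would insert the dichotomy to bound the factor $(1+|x|)^{p+a}$ for $|\alpha|\le p$: in the regime where $(1+|P(x)|)^{d}$ dominates, choosing $N\ge d(p+a)$ lets $(1+|P(x)|)^{-N}$ absorb it; in the complementary regime the Gaussian $e^{-|\rho|^{2}}$ beats the polynomial $(1+|\rho|)^{p+a}$. In either case $(1+|x|)^{p}|\partial^\alpha h(x)|\le C\,\|g\|_{(N,\R^m)}$ for a suitable $N=N(p,\deg P,\deg Q)$, which simultaneously proves $h\in\cS(\R^n)$ and yields $\|h\|_{(p,\R^n)}\le C\,\|g\|_{(N,\R^m)}$.

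Finally I would read off the statement. The assignment $g\mapsto h$ is linear and, by the last bound, continuous from $\cS(\R^m)$ to $\cS(\R^n)$; since $h|_E=P^\flat f$, the quotient seminorm of $P^\flat f$ on $\cS(E)$, namely the infimum of $\|H\|_{(p,\R^n)}$ over all $H\in\cS(\R^n)$ with $H|_E=P^\flat f$, is at most $\|h\|_{(p,\R^n)}\le C\,\|g\|_{(N,\R^m)}$, and this holds for every representative $g$ of $f$. Taking the infimum over all such $g$ bounds it by $C$ times the quotient seminorm of $f$ on $\cS(F)$, showing that $P^\flat$ is well defined from $\cS(F)$ into $\cS(E)$ and continuous for the quotient topologies. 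The heart of the argument is thus the single pointwise dichotomy inequality, from which both membership in the Schwartz space and the continuity estimate drop out uniformly.
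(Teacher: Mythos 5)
Your proof is correct and takes essentially the same approach as the paper: where the paper multiplies $g\circ P$ by the cutoff $\Psi\bigl(x-Q(P(x))\bigr)$, equal to $1$ on $E$ and vanishing for $|x-Q(P(x))|>2$, so that the bound $|x|\le 2+|Q(P(x))|\le C(1+|P(x)|)^\ell$ on the support gives the Schwartz estimates, you use the Gaussian damping factor $e^{-|x-Q(P(x))|^2}$ and obtain the same estimate via your two-case dichotomy. Both constructions are linear and continuous in the representative $g$ and both finish with the identical quotient-seminorm argument, so the difference is purely cosmetic.
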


\begin{proof}
We show that if $f$ is in $\cS(\R^m)$, then
$P^\flat f$ can be extended to a function $\widetilde{P^\flat f}$
in $\cS(\R^n)$ in a linear and continuous way. Let $\Psi$ be a smooth function on $\R^n$ 
 such that
$\Psi(t)=1$ if $|t|\leq 1$ and  $\Psi(t)=0$ if $|t|> 2$.
Define 
$$
\widetilde{P^\flat f}(x)=\Psi(x-Q\circ P(x))\,\,(f\circ P)(x) 
\qquad \forall x\in \R^n.
$$
Clearly $\widetilde{P^\flat f}$ is smooth and 
$\widetilde{P^\flat f}|_E=P^\flat f$.
Moreover $\widetilde{P^\flat f}$ is zero when $|x-Q\circ P(x)|>2$,
so it suffices to prove rapid decay for $|x-Q\circ P(x)|\leq 2$.
Note that
there exists $\ell$ in $\N$ such that
$$
|x|\leq 2+|Q( P(x))|\leq C\,(1+|P(x)|)^\ell
\qquad \forall x\in \R^n,\quad |x-Q\circ P(x)|\leq 2.
$$
Therefore given a positive integer $p$ there exists
a positive integer $q$ such that 
$
\|\widetilde{P^\flat f}\|_{(p,\R^n)}\leq C\, 
  \|f\|_{(q,\R^m)}.
$
The thesis follows immediately from  the
definition of the quotient topology on
$\cS(F)$ and $\cS(E)$.
%
\end{proof}

\begin{corollary}\label{cambiogen}
Suppose that $\{V_0,\ldots,V_d\}$ and $\{W_0,\ldots,W_s\}$ are two
sets of formally
self-adjoint generators of $\algop$. Then the spaces 
${\cS}(\Sigma_K^V)$ and ${\cS}(\Sigma_K^W)$ are topologically isomorphic.
\end{corollary}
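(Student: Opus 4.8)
The plan is to exhibit the two spectra $\Sigma_K^V\subset\R^{d+1}$ and $\Sigma_K^W\subset\R^{s+1}$ as polynomial images of one another, related by maps that are mutually inverse \emph{on the spectra}, and then to let Lemma~\ref{mappapropria2} do all the analytic work. I would start from the fact that, for each bounded spherical function $\phi$, the assignment $D\mapsto\widehat D(\phi)$ is a character of the commutative algebra $\algop$: since $\phi$ is a joint eigenfunction, $(D_1D_2)\phi=\widehat{D_1}(\phi)\,\widehat{D_2}(\phi)\,\phi$ and $(D_1+D_2)\phi=(\widehat{D_1}(\phi)+\widehat{D_2}(\phi))\,\phi$, so $\widehat{\cdot}(\phi)$ is multiplicative and linear. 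Consequently, whenever an element $D\in\algop$ is written as a polynomial $D=R(V_0,\dots,V_d)$ in the generators, its eigenvalue on $\phi$ is $\widehat D(\phi)=R\bigl(\widehat V(\phi)\bigr)$.

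Next I would use that both $V$ and $W$ generate $\algop$. Expressing each $W_i$ as a polynomial in the $V_j$, and each $V_j$ as a polynomial in the $W_i$, and recalling that on the spectrum the eigenvalues $\widehat{V_j}(\phi)$, $\widehat{W_i}(\phi)$ are real, I would replace these (a priori complex) polynomials by their real parts to obtain genuine real polynomial maps $\mathcal Q\colon\R^{d+1}\to\R^{s+1}$ and $\mathcal P\colon\R^{s+1}\to\R^{d+1}$ with $\mathcal Q\circ\widehat V=\widehat W$ and $\mathcal P\circ\widehat W=\widehat V$ on the Gelfand spectrum. Since, by \cite{FR}, every point of $\Sigma_K^V$ (resp. $\Sigma_K^W$) is of the form $\widehat V(\phi)$ (resp. $\widehat W(\phi)$), these identities give at once $\mathcal P(\Sigma_K^W)=\Sigma_K^V$, $\mathcal Q(\Sigma_K^V)=\Sigma_K^W$, and
\[
\mathcal Q\circ\mathcal P=\mathrm{id}\ \text{on}\ \Sigma_K^W,\qquad
\mathcal P\circ\mathcal Q=\mathrm{id}\ \text{on}\ \Sigma_K^V,
\]
because $\mathcal Q\bigl(\mathcal P(\widehat W(\phi))\bigr)=\mathcal Q(\widehat V(\phi))=\widehat W(\phi)$, and symmetrically. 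Note that injectivity of $\widehat V$, $\widehat W$ is not even needed here: only the character property and the surjectivity onto the spectrum enter.

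With these relations the conclusion is formal. Applying Lemma~\ref{mappapropria2} with $E=\Sigma_K^W$, $F=\Sigma_K^V$ and the pair $(\mathcal P,\mathcal Q)$ yields a continuous pullback $\mathcal P^\flat\colon\cS(\Sigma_K^V)\to\cS(\Sigma_K^W)$, $f\mapsto f\circ\mathcal P|_{\Sigma_K^W}$; applying the lemma again with the roles of $E,F$ exchanged and the pair $(\mathcal Q,\mathcal P)$ yields a continuous $\mathcal Q^\flat\colon\cS(\Sigma_K^W)\to\cS(\Sigma_K^V)$. For $f\in\cS(\Sigma_K^V)$ one then computes $\mathcal Q^\flat(\mathcal P^\flat f)=f\circ(\mathcal P\circ\mathcal Q)|_{\Sigma_K^V}=f$, and likewise $\mathcal P^\flat\circ\mathcal Q^\flat=\mathrm{id}$; hence $\mathcal P^\flat$ and $\mathcal Q^\flat$ are mutually inverse topological isomorphisms, which is the assertion.

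I expect the only genuinely delicate point to be the passage in the second paragraph from the algebraic relations inside $\algop$ to the polynomial identities on the spectra. It rests precisely on the multiplicativity of $\phi\mapsto\widehat D(\phi)$ together with the description of $\Sigma_K^V$ and $\Sigma_K^W$ as the sets of eigenvalue tuples of spherical functions; the reality of the spectra is what lets one pass to honest real polynomial maps by taking real parts. Once this dictionary is in place, Lemma~\ref{mappapropria2} supplies all the required continuity, and no further Schwartz-space estimates are needed.
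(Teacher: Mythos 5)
Your proposal is correct and follows essentially the same route as the paper: express each generating set polynomially in terms of the other, obtain the polynomial maps $P$ and $Q$, and apply Lemma~\ref{mappapropria2} in both directions. The paper's proof is just this in two lines (it asserts the real polynomial relations directly), while you additionally spell out the justifying details it leaves implicit --- the character property of $\phi\mapsto\widehat D(\phi)$ that transfers algebra relations to identities on the spectra, and the passage to real parts to get honest real polynomial maps.
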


\begin{proof}
There exist real polynomials
$p_j$, $j=0,1,\ldots,s$, and 
$q_h$, $h=0,1,\ldots,d$, such that  
$
W_j=p_j(V_0,\ldots,V_d)
$
and
$
V_h=q_h(W_0,\ldots,W_s).
$

Setting $P=(p_0,\,p_1,\ldots,p_s):\R^{d+1} \rightarrow\R^{s+1}$  
and $Q=(q_0,\,q_1,\ldots,q_d):\R^{s+1} \rightarrow\R^{d+1} $,  
we can apply Lemma~\ref{mappapropria2} in both directions.
\end{proof}

\section{Choice of the generators} \label{sceltagen}

In this section $K$ shall 
 be a closed  connected subgroup of $\Un$
 such that  $(K\ltimes \Hn,K)$
is a Gelfand pair.
 The subject of the following lemma
 is the choice of a convenient
 set  of formally
self-adjoint generators of $\algop$.

\begin{lemma}\label{autinteri}
A generating system
$\{V_0=-iT,V_1,\ldots,V_d\}$ can be chosen such that, for each $j=1,\ldots,d$,
 \begin{enumerate}
\item\label{uno}
$V_j$ is homogeneous of even order $2\,m_j$;

\item\label{due}
$V_j$ is formally
self-adjoint and $\widehat V_j(\phi_{\alpha})$ is a positive integer
for every  $\a$  in $\Lambda$;

\item\label{tre}
$\widehat V_j(\eta_{Kw})=\rho_j (w,\bar w)$, for every $w$ in $\C^n$,
where  $\rho_j$ is a
nonnegative homogenous polynomial  of degree $2\,m_j$,
strictly positive outside of the origin.
\end{enumerate}
\end{lemma}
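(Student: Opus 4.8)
The plan is to build $V_1,\dots,V_d$ by symmetrizing a homogeneous Hilbert basis of the ring $R$ of $K$--invariant polynomials in $z,\bar z$, the prescribed operator $V_0=-iT$ taking care of the central direction (recall $\widehat{V_0}(\phi_{\l,\alpha})=\l$ and $\widehat{V_0}(\eta_{Kw})=0$). The first observation is a reduction of the generation requirement: with respect to the filtration of $\algop$ by differential order one has $\mathrm{gr}\,\algop=S(\mathfrak h_n)^K=R[t]$, where $t$ is the symbol of $T$, and the symbol of the symmetrization $\mathrm{sym}(\sigma)$ is $\sigma$. Hence, in order that $\{V_0,\dots,V_d\}$ generate $\algop$ it suffices that the principal symbols of $V_1,\dots,V_d$, together with $t$, generate $R[t]$, i.e.\ that they generate $R$ as a ring. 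This reduction is what makes the later adjustments harmless: each of them will either rescale a $V_j$, add a multiple of $V_0^{\,m_j}$, or add a power of $|z|^2$, and none of these destroys the property that the top-order symbols still generate $R$.

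Property~\eqref{uno} rests on the fact that, for a Gelfand pair, \emph{every} homogeneous $K$--invariant polynomial in $z,\bar z$ is balanced, hence of even degree. To see this I would decompose the $K$--invariants inside the space of all polynomials in $z,\bar z$ bihomogeneously, $\bigoplus_{a,b}\bigl(\cP_a(\C^n)\otimes\overline{\cP_b(\C^n)}\bigr)^K$, and then $\bigl(\cP_a\otimes\overline{\cP_b}\bigr)^K=\bigoplus_{|\alpha|=a,\,|\beta|=b}\mathrm{Hom}_K(P_\beta,P_\alpha)$. By Schur's lemma this vanishes unless $\alpha\cong\beta$; but multiplicity-freeness of $\nu_+$---which is precisely the Gelfand condition---forces each equivalence class to occur in a single degree, so $\alpha\cong\beta$ gives $a=|\alpha|=|\beta|=b$. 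Thus the bidegree-$(a,b)$ invariants are zero for $a\neq b$. A homogeneous Hilbert basis $\sigma_1,\dots,\sigma_d$ of $R$ then consists of balanced polynomials of bidegree $(m_j,m_j)$, and $\mathrm{sym}(\sigma_j)$ is homogeneous of even order $2m_j$; I will keep all subsequent corrections homogeneous of degree $2m_j$ by using only $V_0^{\,m_j}$ and powers of $|z|^2$.

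For the degenerate family (property~\eqref{tre}) I would exploit that $d\tau_w$ is one-dimensional, hence annihilates all commutators and factors through the abelianization; therefore $\widehat{\mathrm{sym}(\sigma_j)}(\eta_{Kw})$ is $\sigma_j$ evaluated at the abelianized images of $Z,\bar Z$, i.e.\ a nonzero real constant times $\sigma_j(w,\bar w)$, a real homogeneous polynomial of degree $2m_j$. The same principal-symbol computation shows that the leading part in $\alpha$ of $\widehat{\mathrm{sym}(\sigma_j)}(\phi_\alpha)$ and the polynomial $\rho_j$ have the same overall sign, so one choice of sign of $V_j$ renders the two positivity requirements compatible. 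To make $\rho_j$ strictly positive off the origin I would keep $\sigma_1=|z|^2$ in the basis and replace each remaining $\sigma_j$ by $\sigma_j+C_j\,(|z|^2)^{m_j}$ with $C_j$ a large positive integer: since $|w|^{2m_j}>0$ off the origin and $\rho_j$ is homogeneous, compactness of the unit sphere yields strict positivity for $C_j$ large, while $\sigma_j$ stays recoverable from $\sigma_1$, so balancedness and generation survive. This replacement also makes the leading symbol positive-definite, a point I will use below.

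The genuine obstacle is property~\eqref{due}, the integrality of $\widehat{V_j}(\phi_\alpha)$. Since all invariants are balanced, $d\pi_1(\mathrm{sym}(\sigma_j))$ preserves each $\cP_m(\C^n)$ and acts as a scalar on every $P_\alpha$, so $\widehat{V_j}(\phi_\alpha)$ is a well-defined real number, given by a polynomial in the highest weight of $\alpha$ (as in the model case $\mathcal L=-2\,\mathrm{sym}(|z|^2)$, for which $\widehat{\mathcal L}(\phi_\alpha)=|\alpha|+n/2$). The crucial input is the rationality of the generalized binomial coefficients, proved in~\cite{BR00} through the classification of the pairs: it forces these eigenvalue polynomials to have rational coefficients, hence to take values in $\tfrac1{D_j}\Z$ for a fixed integer $D_j$. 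Rescaling $V_j$ by $D_j$ then produces integer eigenvalues while only multiplying $\rho_j$ by a positive constant and preserving order, self-adjointness and generation. Finally, because the leading symbol is now positive-definite the eigenvalues are bounded below on $\Lambda$, so adding a large positive integer multiple of $V_0^{\,m_j}$---which is homogeneous of degree $2m_j$, equals $1$ on every $\phi_\alpha$ and vanishes on every $\eta_{Kw}$---shifts them into $\Z_{>0}$ without touching $\rho_j$ or any other property. Assembling these steps yields $\{V_0,\dots,V_d\}$ with all three properties, the integrality step, resting on the classification-dependent rationality, being where essentially all the difficulty lies.
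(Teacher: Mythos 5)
The decisive gap is in your integrality step for property~(\ref{due}), which is exactly where you say all the difficulty lies. Your chain is: the eigenvalue map $\a\mapsto\widehat{V_j}(\phi_\a)$ is ``given by a polynomial in the highest weight of $\a$'', the rationality result of \cite{BR00} ``forces these eigenvalue polynomials to have rational coefficients, hence to take values in $\tfrac1{D_j}\Z$''. But what \cite{BR00} provides is rationality of each individual eigenvalue, and rationality of the values alone does \emph{not} bound denominators (they could grow with $|\a|$, killing the rescaling step). What would save the argument is precisely the polynomiality of the eigenvalues in the highest weight; that is a substantive theorem about multiplicity-free actions which you assert in a parenthesis, with the sublaplacian as the only evidence, and neither prove nor cite. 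The paper's proof of Lemma~\ref{autinteri} is engineered to avoid exactly this point: since the $\gamma_j$ have rational coefficients and $d\pi_1(Z_h)=\partial_{w_h}$, $d\pi_1(\bar Z_h)=-\tfrac12 w_h$, the matrices $M_{j,m}$ of $d\pi_1(L_{\gamma_j})$ on $\cP_m(\C^n)$ have rational entries with denominators in a finite set independent of $m$; after clearing them by a single integer $N$, the numbers $N\widehat{L_{\gamma_j}}(\phi_\a)$ are roots of monic integer polynomials, i.e.\ algebraic integers, and being rational (this, and only this, is where \cite{BR00} enters) they are integers. Bounding denominators of matrix entries is immediate; bounding denominators of eigenvalues is not, and your proposal conflates the two.

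Two further steps are asserted rather than proved, and one of them undermines the role the lemma plays. First, the claims that the ``leading part'' of $\widehat{\mathrm{sym}(\sigma_j)}(\phi_\a)$ has the same sign as $\rho_j$, and that positive-definiteness of the symbol makes the eigenvalues bounded below on all of $\Lambda$, do not follow from any computation you actually perform; they amount to a G\aa rding-type inequality in the Fock model, or else to the explicit eigenvalue formulas of \cite{BJRW,BR98}, which is what the paper invokes --- and only for the special generators $\gamma_j=p_{\delta_j}$ (sums of squared moduli), for which all eigenvalues have sign $(-1)^{m_j}$; an arbitrary Hilbert basis has no such property, which is why the paper works with the Howe--Umeda basis from the start. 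Second, your final shift $V_j\mapsto V_j+C_jV_0^{m_j}$ does preserve the literal statement, since $\widehat{V_0^{m_j}}(\phi_\a)=1$ and $\widehat{V_0^{m_j}}(\eta_{Kw})=0$, but since $\widehat{V_0^{m_j}}(\phi_{\l,\a})=\l^{m_j}$ it destroys the scaling identity~(\ref{scaling}) when $\l<0$ and $m_j$ is odd, and it even makes the eigenvalues $\widehat{V_j}(\phi_{\l,\a})$ negative for $\l<0$ large. That identity and that positivity are deduced right after the lemma and used repeatedly afterwards (Lemma~\ref{verificaRockland}, Proposition~\ref{geller}, the operator $E$); their derivation tacitly requires the $V_j$ to be $T$-free, as the paper's are. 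The correct shift is the paper's, by $\mathcal L^{m_j}$, whose eigenvalue $(2|\a|+n)^{m_j}$ is a positive integer and even in $\l$, and which simultaneously gives the strict positivity of $\rho_j$ in~(\ref{tre}). On the positive side, your reduction of the generation statement to principal symbols via the graded algebra $S(\frak h_n)^K$, and your balancedness argument for evenness, are correct, and the former is a cleaner, self-contained replacement for the paper's citations.
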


Notice that~(\ref{uno}) and (\ref{due}) imply that when $j=1,\ldots,d$
\begin{equation}\label{scaling}
\widehat V_j(\phi_{\l,\a})=|\l|^{m_j}\,\widehat V_j(\phi_{\a}),
\qquad
\forall\l\in\R\setminus\{0\},\quad\forall\a\in\Lambda.
\end{equation}

\begin{proof}
Let $\C^n_\R$ denote $\C^n$ with the underlying structure of a real vector space. We denote by $\cP(\C^n_\R)\cong \cP(\C^n)\otimes\overline{\cP(\C^n)}$ the algebra of polynomials in $z$ and $\bar z$, and by $\cP^K(\C^n_\R)$ the subalgebra of $K$--invariant polynomials.

The fact that the representation
of $K$
on $\cP(\C^n)$ is multiplicity free implies that the trivial representation is contained in $P_\alpha\otimes\overline{P_\beta}\subset \cP(\C^n_\R)$ if and only if $\alpha=\beta$, and with multiplicity one in each of them. Therefore a linear basis of $\cP^K(\C^n_\R)$ is given by the polynomials
\begin{equation}\label{invarianti}
p_\alpha(z,\overline z)=
\sum_{h=1}^{{\rm dim}(P_\a)}v_h(z)\,\overline{v_h(z)}=
\sum_{h=1}^{{\rm dim}(P_\a)}|v_h(z)|^2\ ,
\end{equation}
where  $\{v_1,\ldots, v_{{\rm dim} (P_\alpha)}\}$ is any orthonormal
 basis of $P_\alpha$ in the ${\cF}_1$-norm.

 A result in~\cite{HU} ensures that there exist $\delta_1,\ldots,\delta_d$ in $\Lambda$
 such that the polynomials 
 $$
\gamma_j=p_{\delta_j}\qquad j=1,\ldots,d,
 $$
freely generate  $ \cP^K(\C_\R^n)$.
In~\cite{BR00} the authors prove that 
$\gamma_1,\ldots,\gamma_d$ have
rational coefficients.
More precisely, setting $m_j=|\delta_j|$, each $\gamma_j$ can be written in the form
$$
\gamma_j(z,\bar z)=
 \sum_{|{\bf a}|=|{\bf b}|=m_j}
 \theta^{(j)}_{{\bf a},{\bf b}}\,\,z^{\bf a}\bar z^{\bf b},
 $$
 where ${\bf a},{\bf b}$ are in $\N^n$ 
 and  $\theta^{(j)}_{{\bf a},{\bf b}}$ are rational numbers.
 
The symmetrization $L_{\gamma_j}$ of $\gamma_j(Z,\bar Z)$ is a homogenous operator
of degree $2m_j$  in $\algop$ with rational coefficients, and $\{-iT,\,L_{\gamma_1},\ldots,L_{\gamma_d}\}$ generate $\algop$
~\cite{BJR92, BJRW}. Moreover, the eigenvalues
$\widehat L_{\gamma_j}(\phi_{\alpha})$ are rational numbers~\cite{BR98,BR00}.

Fix any positive integer $m$ and  denote by $M_{j,m}$ the matrix which 
 represents the restriction of 
 $ d\pi_1\left(L_{\gamma_j}\right)$
to $\cP_m(\C^n)$ in the basis of
 monomials $w^\a$, $|\a|=m$. 
For every $F$ in $\cF_1(\C^n)$,
\begin{equation*}
[d\pi_1(Z_h)F](w)=[\partial_{w_h}F](w) \qquad
[d\pi_1(\bar Z_h)F](w)=-{1\over 2}\,w_h\,F(w)
\qquad \forall w\in \C^n,
\end{equation*}
for $h=1,\ldots,n$.
Therefore $M_{j,m}$ has rational entries,
with denominators varying in a finite set independent
of  $m$. We can then take $N$ such that the matrices $NM_{j,m}$
have integral entries, for all $m$ and $j=1,\ldots, d$.
Thus the characteristic polynomial of $NM_{j,m}$ is monic,
with integral coefficients and 
rational zeroes
$N\widehat {L_{\gamma_j}}(\phi_{\alpha})$; therefore these zeroes must be integers. 
They all have the same sign, independently of $m$, equal to $(-1)^{m_j}$ (cf.~\cite{BJRW}).

We then define
 $$
 V_j=N\,(-1)^{m_j}\,L_{\gamma_j}+\mathcal L^{m_j},\qquad j=1,\ldots,d,
 $$
 where $\mathcal L=-2\sum_{j=1}^n
 \bigl( Z_j\overline Z_j+\overline Z_j Z_j\bigr)$ is the $\Un$--invariant sublaplacian, satisfying $\widehat{\mathcal L}(\phi_\alpha)=2|\alpha|+n$. We show that
$\{V_0=-iT,V_1,\ldots,V_d\}$ is a set of generators 
satisfying the
required conditions. 

Since $\sum_{m_j=1}\gamma_j=\frac12|z|^2$ (cf. \cite{BJRW}), we have
$$
\sum_{m_j=1} L_{\gamma_j}= \sum_{j=1}^n
 \bigl( Z_j\overline Z_j+\overline Z_j Z_j\bigr)=-\frac12 {\mathcal L}\ ,
$$
and then
\begin{equation}\label{scomposL}
\sum_{m_j=1}V_j
=
\sum_{m_j=1}\left(-
N\,L_{\gamma_j}+\mathcal L\right)
=\left(\frac{N}2+r\right)\,\mathcal L\ ,
\end{equation}
where  $r$ is  the cardinality of the set $\set{\delta_j\,:\, m_j=|\delta_j|=1}$.
Therefore each $L_{\gamma_j}$ is a polynomial in 
$V_1,\ldots,V_d$.
Since $-iT,L_{\gamma_1},\ldots,L_{\gamma_d}$
generate $\algop$, the same holds for $-iT,V_1,\ldots,V_d$.

Condition~(\ref{uno}) follows from the homogeneity 
of $L_{\gamma_j}$ and $\mathcal L^{m_j}$, and from (\ref{invarianti}). 
Since the polynomials in (\ref{invarianti}) are real-valued, then
the $V_j's$ are formally self-adjoint and 
condition~(\ref{due}) is easily verified.
Finally condition~(\ref{tre}) follows from (\ref{autoveta}), which gives
$$
\widehat{L_{\gamma_j}}(\eta_{Kw})=(-1)^{m_j}\,\gamma_j(w,\bar w)\ ,
\qquad
\widehat{{\mathcal L}^{m_j}}(\eta_{Kw})=|w|^{2m_j}
$$
for all $w$ in $\C^n$ and $j=1,\ldots,d$. 
\end{proof}

\bigskip

Let  $V=\{V_0,V_1,\ldots,V_d\}$ denote the privileged  set of generators
chosen in Lemma~\ref{autinteri}.
 We set $\rho=(\rho_1,\ldots,\rho_d)$ the polynomial map in (\ref{tre}) of Lemma~\ref{autinteri}.

Coordinates in $\R^{d+1}$ will be denoted by $(\lambda,\xi)$, 
with $\lambda$ in $\R$ and $\xi$ in $\R^{d}$.
So, if $(\lambda,\xi)=\widehat {V}(\phi)$, then
either $\lambda=-i\widehat T(\phi)=0$, in which case $\phi=\eta_{Kw}$ and $\xi=\rho(w,\bar w)$,
or $\lambda=-i\widehat T(\phi)\neq 0$, in which case $\phi=\phi_{\l,\alpha}$ and 
$\xi_j=\widehat{V_j}(\phi_{\l,\alpha})=|\l|^{m_j}\, \widehat{V_j}(\phi_{\alpha})$.

The spectrum $\Sigma_K^V$ consists therefore of two parts. The first part is $\Sigma_0=\{0\}\times \rho(\C^n)$, a semi-algebraic set. The second part, $\Sigma'$, is the countable union of the curves $\Gamma_\alpha(\l)=\widehat V(\phi_{\l,\alpha})$, $\l\ne0$. Each $\Gamma_\alpha$ and $\Sigma_0$ are homogeneous with respect to the dilations 
\begin{equation}\label{dilations}
(\l,\xi_1.\dots,\xi_d)\mapsto (t\l,t^{m_1}\xi_1.\dots,t^{m_d}\xi_d)\ ,\qquad (t>0) 
\end{equation}
and to the symmetry $(\l,\xi_1.\dots,\xi_d)\mapsto(-\l,\xi_1.\dots,\xi_d)$. By our choice of $V$, $\Sigma'\cap\{\l=1\}$ is contained in the positive integer lattice. Moreover
$\Sigma'$ is dense in $\Sigma_K^V$ (cf. \cite{BJRW, G})

For the sake of brevity, we denote by $\hat f$ the Gelfand transform $\gelfandgen{V}_K f$ of the integrable $K$--invariant function  $f$ .

\section{Functional calculus}\label{FunctionalCalculus}

In this section we prove one of the two implications of Theorem~\ref{main} for  a  closed connected  subgroup $K$ of $\Un$
such that  $(K\ltimes \Hn,K)$
is a Gelfand pair. More precisely we prove 
that if $m$ is a Schwartz function on $\R^{d+1}$, its restriction to $\Sigma_K^V$ is the
 Gelfand transform of a  function $f$ in $\cS_K(\Hn)$ (see Theorem~\ref{main1} below).

The proof is based
on a result of Hulanicki~\cite{H} (see Theorem~\ref{hula} below) on functional calculus for Rockland operators on graded groups and a multi-variate extention of it.

A Rockland operator
$D$ on a graded Lie group $N$ is
a formally self-adjoint left-invariant differential operator on $N$ which is homogeneous
with respect to the dilations and such that, for every nontrivial 
 irreducible representation
$\pi$
of $N$, the operator $d\pi(D)$ is injective on the space of $C^\infty$
vectors.

As noted in~\cite{HJL}, it follows from \cite{NS} and \cite{HN} that
 a Rockland operator $D$ is essentially self-adjoint on the Schwartz space $\cS(N)$, as well as $d\pi(D)$ on the G\aa rding space for every unitary representation $\pi$. We keep the same symbols for the self-adjoint extensions of such operators.

Let $N$  be a graded Lie group,
and let $|\cdot|$ be a homogeneous gauge on it.
 We say that a function on $N$ is Schwartz if and only if it is represented by a Schwartz function on the Lie algebra $\frak n$ in any given set of canonical coordinates. The fact that changes of canonical coordinates are expressed by polynomials makes this condition independent of the choice of the coordinates.
Given a homogeneous basis  $\{X_1,\dots,X_n\}$ 
 of the Lie algebra $\frak n$ we keep the same notation 
 $X_j$ for the associated  left-invariant vector
fields on $N$.
Following~\cite{FS}, we shall consider the following family of
norms on ${\cS}(N)$, parametrized  by a nonnegative integer $p$:
\begin{equation}\label{normeSchwartz}
\|f\|_{(p,N)}=\sup\{(1+|x|)^p\, |X^I f(x)|
\,:x\in N\,,\ \text{deg}\, X^I\leq p\},
\end{equation}
where $X^I=X_1^{i_1}\cdots X_n^{i_n}$
and $\text{deg}\,X^I=\sum\,i_j\text{deg}\,X_j$.
Note that the Frech\'et space structure induced on $\cS(N)$ by this family of norms is independent of the choice of the $X_j$ and is equivalent to that induced from $\cS(\frak n)$ via composition with the exponential map.

\begin{theorem}\label{hula}
\cite{H}
Let $D$ a positive Rockland operator on a graded
Lie group $N$ and let $D=\int_0^{+\infty}\l\, dE(\l)$
be its spectral decomposition. If $m$ is in $\cS(\R)$ and
$$
m(D)=\int_0^{+\infty}m(\l) \,dE(\l),
$$
then there exists $M$ in $\cS(N)$ such that
$$
m(D)f=f\ast M\qquad \forall f \in\cS(N).
$$
Moreover for every $p$ there exists 
$q$ such that
$$
\|M\|_{(p,N)}\leq C\, \|m\|_{(q,\R)}.
$$
\end{theorem}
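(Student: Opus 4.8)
The plan is to build $m(D)$ out of the heat semigroup $e^{-sD}$, whose convolution kernels are explicit and Schwartz, and then to reassemble $m(D)$ by a contour (Helffer--Sjöstrand) representation that converts rapid decay of $m$ into Schwartz decay of the kernel $M$.

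First I would make the structural reduction. Since $m\in\cS(\R)$ is bounded and $D$ is positive self-adjoint, $m(D)$ is a bounded operator on $L^2(N)$; because $D$ is left-invariant it commutes with all left translations, hence $m(D)$ is right convolution by a unique tempered distribution $M$, and the whole assertion reduces to showing $M\in\cS(N)$ together with the estimate $\|M\|_{(p,N)}\le C\|m\|_{(q,\R)}$. The basic building block is the kernel $h_s$ of $e^{-sD}$ ($s>0$). Since $D$ is Rockland it is hypoelliptic, and $e^{-sD}$ has a kernel $h_s\in\cS(N)$ (this is classical, cf.~\cite{FS}); homogeneity of $D$ of some degree $\nu$ gives the exact scaling $h_s=s^{-Q/\nu}\,h_1\circ\delta_{s^{-1/\nu}}$, where $Q$ is the homogeneous dimension and $\delta_r$ the dilations, whence $\|h_s\|_{(p,N)}\le C_p\,s^{-\sigma(p)}$ for $0<s\le1$ and rapid decay as $s\to\infty$.

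To pass from $h_s$ to a general multiplier I would use an almost analytic extension $\tilde m$ of $m$ to $\C$, chosen so that $\bar\partial\tilde m$ vanishes to a prescribed high order $M_0$ on $\R$, together with the Helffer--Sjöstrand formula
\[
m(D)=\frac1\pi\int_{\C}\bar\partial\tilde m(\zeta)\,(D-\zeta)^{-1}\,dL(\zeta),
\]
which gives $M=\frac1\pi\int_{\C}\bar\partial\tilde m(\zeta)\,R_\zeta\,dL(\zeta)$ with $R_\zeta$ the convolution kernel of $(D-\zeta)^{-1}$. For $\RE\zeta<0$ one has $R_\zeta=\int_0^\infty e^{s\zeta}h_s\,ds$, but the crucial input is a Schwartz-seminorm bound of the form $\|R_\zeta\|_{(p,N)}\le C_p\,(1+|\zeta|)^{a}\,\mathrm{dist}(\zeta,[0,\infty))^{-b(p)}$ valid up to the spectrum. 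Granting such a bound, one chooses the vanishing order $M_0>b(p)$, so that $|\bar\partial\tilde m(\zeta)|\lesssim|\IM\zeta|^{M_0}$ kills the singularity of $R_\zeta$ near the real axis, while the rapid decay of $\tilde m$ in the real direction controls large $|\zeta|$; the integral then converges in the seminorm $\|\cdot\|_{(p,N)}$ and depends linearly on finitely many seminorms of $m$ (through the extension), yielding $\|M\|_{(p,N)}\le C\|m\|_{(q,\R)}$ with $q=q(p)$.

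The main obstacle is precisely the resolvent-kernel estimate: controlling $\|R_\zeta\|_{(p,N)}$ with explicit polynomial growth in $1/\mathrm{dist}(\zeta,[0,\infty))$ and in $|\zeta|$. This is where the Rockland hypothesis is indispensable---injectivity of $d\pi(D)$ yields hypoellipticity and homogeneous \emph{a priori} (subelliptic) estimates, which together with homogeneity and a weighted Sobolev embedding upgrade the operator bound $\mathrm{dist}(\zeta,[0,\infty))^{-1}$ for $(D-\zeta)^{-1}$ to the required pointwise, derivative-controlled bounds on $R_\zeta$. An alternative, closer to Hulanicki's original argument, would bypass the contour integral: one estimates the weighted $L^2$ norms $\|(1+|\cdot|)^pX^IM\|_{L^2}$ directly, by commuting $m(D)$ with the coordinate functions and with the $X_j$ and exploiting the spectral calculus, and then invokes the subelliptic Sobolev embedding to convert these into the Schwartz bounds; the technical heart is the same, namely trading decay of $m$ against smoothness and decay of $M$ via hypoellipticity.
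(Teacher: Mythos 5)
You should first be aware that the paper does not prove this statement at all: Theorem~\ref{hula} is quoted from Hulanicki's paper \cite{H} and used as a black box (its only role is as input to Theorem~\ref{Veneruso}). So the comparison can only be with Hulanicki's original argument and with the internal soundness of your own proposal, and there your main route has a genuine gap.

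The gap is precisely the ``crucial input'' you grant yourself: the bound $\|R_\zeta\|_{(p,N)}\le C_p\,(1+|\zeta|)^{a}\,\mathrm{dist}(\zeta,[0,\infty))^{-b(p)}$ for the kernel $R_\zeta$ of $(D-\zeta)^{-1}$. This is false, not merely unproved: whenever the homogeneous dimension $Q$ of $N$ exceeds the homogeneity degree $\nu$ of $D$ (the generic case; e.g.\ for the sublaplacian on $\Hn$ one has $\nu=2<Q=2n+2$), the kernel of $(D-\zeta)^{-1}$ is \emph{singular at the identity}, behaving locally like a fundamental solution $\sim|x|^{\nu-Q}$, so every Schwartz seminorm $\|R_\zeta\|_{(p,N)}$ is infinite. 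Your own formula $R_\zeta=\int_0^\infty e^{s\zeta}h_s\,ds$ shows this: with the scaling you wrote, $\|h_s\|_{(p,N)}\sim s^{-\sigma(p)}$ as $s\to0^+$, and $\int_0^1 s^{-\sigma(p)}\,ds$ diverges once $\sigma(p)\ge1$. (Also, the seminorms $\|h_s\|_{(p,N)}$ grow polynomially as $s\to\infty$ because the weight $(1+|x|)^p$ beats the spreading kernel; only the sup norm decays, so your claim of rapid decay there is wrong too, though less damaging.) The Helffer--Sj\"ostrand route can be repaired: integrate by parts in $\zeta$ to replace $(D-\zeta)^{-1}$ by $(D-\zeta)^{-k}$ with $k=k(p)$ large, whose kernel does gain smoothness, and take the almost analytic extension to vanish on $\R$ to order larger than the resulting power of $\mathrm{dist}(\zeta,[0,\infty))^{-1}$. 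But then the weighted, derivative-controlled kernel estimates for $(D-\zeta)^{-k}$, uniform in $\zeta$ up to the spectrum, are the entire content of the theorem, and hypoellipticity by itself does not deliver them; nothing in your text carries out this step. Your closing ``alternative'' (weighted $L^2$ bounds on $X^I M$ via commutators with coordinate functions, then subelliptic Sobolev embedding) is indeed the skeleton of Hulanicki's actual proof, but as written it is a one-sentence programme rather than an argument. Finally, one caution about your starting point: that the heat kernel of a \emph{general} positive Rockland operator (of arbitrary even degree, not just a sublaplacian) lies in $\cS(N)$ is itself a nontrivial theorem --- it is not in \cite{FS}, and in parts of the literature it is deduced from Hulanicki-type functional calculus, so you must invoke an independent proof of it to avoid circularity.
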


Suppose that $D_1,\ldots,D_s$ form a commutative
family of  self-adjoint operators 
on $N$ (in the sense that they have commuting spectral resolutions). Then they admit a joint spectral 
resolution and one can define the bounded operator
$m(D_1,\ldots,D_s)$ for any bounded Borel function $m$ on their joint spectrum in $\R^s$.
The following theorem was proved in \cite{V} in a special situation.

\begin{theorem}\label{Veneruso}
Suppose that 
$N$  is a graded Lie group and
$D_1,\ldots,D_s$ form a commutative
family of positive
Rockland operators on $N$. If $m$ is in $ \cS(\R^s)$, then
 there exists $M$ in $\cS(N)$ such that
$$
m(D_1,\ldots,D_s)f=f*M\ .
$$
Moreover, for every $p$ there exists
$q$ such that
$$
\|M\|_{(p,N)}\leq C\, \|m\|_{(q,\R^{d})}
$$
\end{theorem}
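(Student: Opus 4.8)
The plan is to realize $m(D_1,\dots,D_s)$ as right convolution with a distribution $M$ and to prove the quantitative estimates $\|M\|_{(p,N)}\le C\,\|m\|_{(q,\R^s)}$; these give simultaneously that $M\in\cS(N)$, that $m(D_1,\dots,D_s)f=f*M$, and that the linear map $m\mapsto M$ is continuous. First I would fold the whole family into a single operator. If $D_j$ is homogeneous of degree $a_j$ and $b$ is a common multiple of the $a_j$, I set
$$
\mathcal R=\sum_{j=1}^s D_j^{2b/a_j}\ .
$$
Each summand is positive, formally self-adjoint, left-invariant and homogeneous of degree $2b$, and $d\pi(\mathcal R)\ge d\pi(D_1)^{2b/a_1}$ is injective on $C^\infty$ vectors for every nontrivial $\pi$; hence $\mathcal R$ is a positive Rockland operator, to which Theorem~\ref{hula} applies. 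By the joint functional calculus $\mathcal R=r(D_1,\dots,D_s)$, where $r(\l)=\sum_j\l_j^{2b/a_j}$ is a polynomial, positive on the joint spectrum $[0,\infty)^s$.

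Second, I establish existence and $L^2$--membership of the kernel. For every $\widetilde m\in\cS(\R^s)$ the operator $\widetilde m(D_1,\dots,D_s)$ is bounded on $L^2(N)$, left-invariant, and therefore given by right convolution with a tempered distribution $M_{\widetilde m}$. For $L$ large I write
$$
\widetilde m(D)=\bigl[(1+r)^L\widetilde m\bigr](D)\,(1+\mathcal R)^{-L}\ ,
$$
the two factors commuting and their symbols multiplying back to $\widetilde m$ on $[0,\infty)^s$. Applying Theorem~\ref{hula} to a Schwartz function on $\R$ equal to $(1+\lambda)^{-L}$ on $[0,\infty)$ shows that $(1+\mathcal R)^{-L}$ is convolution by some $h_L\in\cS(N)$, while $[(1+r)^L\widetilde m](D)$ is bounded with norm $\le\|(1+r)^L\widetilde m\|_\infty$. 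Since $(1+\mathcal R)^{-L}\delta=h_L$, the kernel is $M_{\widetilde m}=[(1+r)^L\widetilde m](D)\,h_L$, whence $\|M_{\widetilde m}\|_2\le C\,\|\widetilde m\|_{(q_0,\R^s)}$. Because $\mathcal R$ is left-invariant and $\mathcal R^p\widetilde m(D)=(r^p\widetilde m)(D)$, we get $\mathcal R^pM=M_{r^pm}$ with $r^pm\in\cS(\R^s)$, so $\|\mathcal R^pM\|_2\le C\,\|m\|_{(q_p,\R^s)}$ for every $p$. By hypoellipticity of $\mathcal R$ and the Sobolev embedding attached to it (cf.~\cite{FS}), control of $\|\mathcal R^pM\|_2$ for all $p$ yields $M\in C^\infty(N)$ together with pointwise bounds $\sup_x|X^IM(x)|\le C\,\|m\|_{(q',\R^s)}$ for every left-invariant monomial $X^I$.

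Third — and this is where the genuinely multivariate difficulty lies — I must insert the polynomial weights. Let $Y_1,\dots,Y_n$ denote multiplication by the canonical coordinates. For a right-convolution operator the identity $x=y\cdot(y^{-1}x)$, whose coordinates are polynomial in those of $y$ and of $y^{-1}x$, turns multiplication of the kernel by a monomial $x^\beta$ into a finite sum of terms built from the commutators $[\,Y_k,\widetilde m(D)\,]$ and from convolution by lower-degree weighted kernels. Each commutator is then estimated inside the functional calculus: since every $D_j$ is a differential operator with polynomial coefficients, $[Y_k,D_j]$ has strictly lower order than $D_j$, and passing $Y_k$ through $\widetilde m(D)$ produces derivatives $(\partial^\gamma\widetilde m)(D)$ multiplied by these lower-order operators. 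Iterating, any weighted seminorm $\sup_x(1+|x|)^k|X^IM(x)|$ is majorized by a finite combination of $L^2$--kernel bounds of operators $g(D)$ with $g$ ranging over the Schwartz functions $r^a\,\partial^\gamma m$, each bound being controlled as in the previous paragraph. I expect this commutator bookkeeping — verifying that a fixed number of differentiations in $\l$ and commutations with the $Y_k$ suffices to bound a prescribed seminorm, and that the orders close up precisely because the $D_j$ are differential operators — to be the main obstacle; it is exactly here that the argument of~\cite{V} is adapted to the present family.

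Finally, collecting the estimates of the last two paragraphs produces, for each $p$, an integer $q$ and a constant $C$ with $\|M\|_{(p,N)}\le C\,\|m\|_{(q,\R^s)}$. This shows at once that $M\in\cS(N)$, that $m(D_1,\dots,D_s)f=f*M$, and that the map $m\mapsto M$ is continuous, which is the assertion of the theorem.
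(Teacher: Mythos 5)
Your first two steps are essentially sound: $\mathcal R=\sum_j D_j^{2b/a_j}$ is a positive Rockland operator, the factorization $\widetilde m(D)=\bigl[(1+r)^L\widetilde m\bigr](D)\,(1+\mathcal R)^{-L}$ together with Theorem~\ref{hula} (applied to a Schwartz modification of $(1+\lambda)^{-L}$) produces an $L^2$ kernel, and the identities $\mathcal R^pM=M_{r^pm}$ plus hypoellipticity of $\mathcal R$ give unweighted bounds $\sup_x|X^IM(x)|\le C\,\|m\|_{(q,\R^s)}$. The genuine gap is precisely the step you yourself flag as ``the main obstacle'': the insertion of the polynomial weights. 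Your plan rests on the claim that passing a coordinate multiplication $Y_k$ through $\widetilde m(D)$ ``produces derivatives $(\partial^\gamma\widetilde m)(D)$ multiplied by lower-order operators''. No such formula is available. The Euclidean identity $[x_k,g(D)]=i(\partial_k g)(D)$ holds because $[x_k,D_j]$ is a \emph{constant}, hence commutes with the whole family; here $[Y_k,D_j]$ is a nontrivial differential operator with polynomial coefficients which does not commute with $D_1,\dots,D_s$, so $[Y_k,g(D)]$ is not expressible in the joint functional calculus of the $D_j$ at all. One is forced into Duhamel-type iterated operator integrals, e.g. $[Y_k,e^{i\tau D_j}]=\int_0^\tau e^{i(\tau-\sigma)D_j}[Y_k,iD_j]\,e^{i\sigma D_j}\,d\sigma$, whose terms involve non-invariant operators sandwiched between unitary groups and do not recombine into expressions of the form $(\partial^\gamma\widetilde m)(D)\cdot(\text{lower order})$. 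Closing this bookkeeping is not a routine verification: it amounts to proving a multivariate Hulanicki theorem from scratch, i.e. the very statement at issue, so as written the argument is circular at its decisive point.

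For comparison, the paper (following the idea of~\cite{V}) sidesteps all multivariate kernel estimates. It argues by induction on $s$: one decomposes $m=\sum_k\psi_k\otimes\varphi_k$ with $\psi_k\in\cS(\R^{s-1})$, $\varphi_k\in\cS(\R)$, and $\sum_k\|\psi_k\otimes\varphi_k\|_{(N,\R^s)}<\infty$ for every $N$ (cutoffs adapted to increasing balls followed by a Fourier series expansion to separate the last variable); each term then has kernel $\Psi_k\ast\Phi_k$ with $\Psi_k,\Phi_k$ supplied by the inductive hypothesis and Theorem~\ref{hula}, the series of kernels converges in every Schwartz norm, and its sum is identified with the distributional kernel of $m(D_1,\dots,D_s)$ by the Spectral Theorem. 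The continuity estimate is then obtained for free from the Closed Graph Theorem rather than by tracking constants. If you wish to keep your quantitative approach, the weighted commutator estimates are what must actually be supplied; otherwise the separation-of-variables argument replaces your third step entirely.
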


\begin{proof}
We prove the theorem by induction on $s$.
By Theorem~\ref{hula} the thesis holds
 when $s=1$. Let $s\geq 2$ and
suppose that the thesis holds for $s-1$.
Let $m(\l_1,\dots,\l_s)$ be in $ \cS(\R^s)$. Then
there exist sequences   $\{\psi_k\}$
in $ \cS(\R^{s-1})$ and $\{\varphi_k\}$
in $ \cS(\R)$ such that
$
m(\l_1,\dots,\l_{s-1},\l_s)
=\sum_k\psi_k(\l_1,\dots,\l_{s-1}) \varphi_k(\l_s)
$
and $\sum_k \left\|\psi_{k}\otimes \varphi_{k}\right\|_{N,\R^s}<\infty$
for every~$N$. 

For a proof of this, one can first decompose $m$ as a sum of $C^\infty$-functions supported in a sequence of increasing balls and with rapidly decaying Schwartz norms, and then separate variables in each of them by a Fourier series expansion (see also~\cite{V}).

By the inductive hypothesis, for every $k$ there exist $\Psi_{k}$  and $\Phi_{k}$
in $\cS(N)$  such that
$\psi_{k}(D_1,\ldots, D_{s-1})f=f\ast\Psi_{k}$ and
$\varphi_{k}(D_s)f=f\ast \Phi_{k}$ for every $f$ in $\cS(N)$.
Then
$$
\begin{aligned}
\psi_{k}\otimes \varphi_{k}
\left(D_1,\ldots, D_{s-1},D_s\right)f&=
\psi_{k}\left(D_1,\ldots, D_{s-1}\right)\, \varphi_{k}
\left(D_s\right)f
\\
&=f\ast\Phi_{k}\ast\Psi_{k}\ .
\end{aligned}
$$

By straightforward computations (cf. \cite[Proposition 1.47]{FS}) and the inductive hypothesis, we obtain that
$$
\begin{aligned}
\|\Psi_{k}\ast\Phi_{k}\|_{(p,N)}
&\leq
C_p\,\|\Psi_{k}\|_{(p',N)}\,\|\Phi_{k}\|_{(p''+1,N)}\\
&\leq C_p\,\|\psi_k\|_{(q,\R^{s-1})}
\|\varphi_k\|_{(q',\R)}\\
&\leq C_p\,\|\psi_k\otimes\varphi_k\|_{(q'',\R^s)}\ .
\end{aligned}
$$

Since the series $\sum_k\psi_k\otimes \varphi_k$ 
is  totally convergent in every Schwartz norm
on $\R^s$,
 there exists a function $F$ in $\cS(N)$
such that
$$
\sum_k \Psi_{k}\ast\Phi_{k}=F
$$
and hence
$$
\sum_k f\ast\Psi_{k}\ast\Phi_{k}= f\ast F
$$
for every $f$ in $\cS(N)$.

On the other hand, if $M\in\cS'(N)$ is the convolution kernel of $m\left(D_1,\ldots, D_{s-1},D_s\right)$, 
by the Spectral Theorem,
$$
\begin{aligned}
m\left(D_1,\ldots, D_{d-1},D_d\right)\,f
&=
\sum_k\psi_k\left(D_1,\ldots, D_{d-1}\right)\otimes
\varphi_k\left(D_d\right)\,f
\cr
&=
\sum_kf\ast\Psi_{k}\ast\Phi_{k},
\end{aligned}
$$
 for every $f$ in $\cS(N)$,
with convergence in $L^2(N)$. Therefore $\sum_k \Psi_{k}\ast\Phi_{k}$ converges to $M$ in $\cS'(N)$, i.e. $F=M$ and
$M$ is in $\cS(N)$.

Finally, given $p$ there exists $q$ such that
$$
\| M\|_{(p,N)}
\leq
\|m\|_{(q,\R^d)}\ .
$$

This follows from the Closed Graph Theorem. Indeed, we have shown that
there is a linear  correspondence $m\mapsto M$  from $\cS(\R^{d})$ to $\cS(N)$;
moreover, reasoning as before, if $m_h\to m$ in $\cS(\R^{d})$ and $M_h\to \varphi$
in $\cS(N)$ then $M_h\to M$  by the Spectral Theorem in $\cS'(N)$. Therefore $\varphi=M$.
\end{proof}

Going back to our case, we prove that the operators $V_1,\dots,V_d$ satisfy the hypotheses of Theorem \ref{Veneruso}.

\begin{lemma}\label{verificaRockland}
The differential operators 
$V_1,\ldots, V_d$ defined in Lemma~\ref{autinteri}
form a commutative family of positive Rockland operators on $\Hn$.
\end{lemma}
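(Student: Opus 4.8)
The plan is to verify in turn the four ingredients of the statement — pairwise commutativity, homogeneity together with formal self-adjointness, the Rockland injectivity condition, and positivity — and finally to upgrade mere commutativity to the commutation of the spectral resolutions. Commutativity is immediate: by Lemma~\ref{autinteri} each $V_j$ lies in $\algop$, which is commutative precisely because $(K\ltimes\Hn,K)$ is a Gelfand pair, so the $V_j$ commute as differential operators on the common invariant core $\cS(\Hn)$. Homogeneity of even degree $2m_j$ and formal self-adjointness are properties~(\ref{uno}) and~(\ref{due}) of Lemma~\ref{autinteri}, and together with the Rockland condition they yield essential self-adjointness on $\cS(\Hn)$ by the results quoted before Theorem~\ref{hula}.

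For the Rockland condition I would run through the nontrivial irreducible representations of $\Hn$, namely the Bargmann representations $\pi_\l$ with $\l\ne0$ and the nontrivial characters $\tau_w$ with $w\ne0$. On $\pi_\l$, recall from the Preliminaries that $d\pi_\l(V_j)$ preserves each summand $P_\a$ of the orthogonal decomposition $\cF_\l=\bigoplus_\a P_\a$ and acts there as the scalar $\widehat{V_j}(\phi_{\l,\a})=|\l|^{m_j}\widehat{V_j}(\phi_\a)$ of~(\ref{scaling}); by property~(\ref{due}) this scalar equals $|\l|^{m_j}$ times a positive integer, hence is nonzero. Thus if $v=\sum_\a v_\a$ is a smooth vector with $d\pi_\l(V_j)v=0$, projecting onto $P_\a$ forces $v_\a=0$ for every $\a$, so $v=0$ and $d\pi_\l(V_j)$ is injective. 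On a character $\tau_w$ the smooth vectors are just the scalars, and $d\tau_w(V_j)$ is multiplication by a single number; averaging $\tau_{kw}$ over $K$ and using the $K$-invariance of $V_j$ identifies this number with $\widehat{V_j}(\eta_{Kw})=\rho_j(w,\bar w)$, which by property~(\ref{tre}) is strictly positive for $w\ne0$. Hence $d\tau_w(V_j)$ is a nonzero scalar and injectivity holds, so each $V_j$ is a Rockland operator.

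Positivity follows from the same eigenvalue computation: the Plancherel support consists of the Bargmann representations, on which $d\pi_\l(V_j)$ has eigenvalues $|\l|^{m_j}\widehat{V_j}(\phi_\a)\ge0$, so the spectrum of the self-adjoint extension of $V_j$ lies in $[0,+\infty)$. For the commuting spectral resolutions I would argue fibrewise through the Plancherel decomposition: on each $\cF_\l$ the operators $d\pi_\l(V_1),\dots,d\pi_\l(V_d)$ are simultaneously diagonalized by the single orthogonal decomposition $\cF_\l=\bigoplus_\a P_\a$, each acting as the scalar $|\l|^{m_j}\widehat{V_j}(\phi_\a)$ on $P_\a$, so they plainly admit a joint spectral resolution on every fibre; integrating these joint resolutions against Plancherel measure produces commuting spectral resolutions for $V_1,\dots,V_d$ on $L^2(\Hn)$.

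I expect the genuinely delicate point to be the Rockland condition at the nontrivial characters $\tau_w$. On the Bargmann side injectivity is automatic from the positivity of the integer eigenvalues $\widehat{V_j}(\phi_\a)$, but on the characters it rests entirely on the strict positivity of $\rho_j$ away from the origin, which is exactly the normalization engineered in property~(\ref{tre}) of Lemma~\ref{autinteri}; without it some $d\tau_w(V_j)$ could vanish for $w\ne0$ and the Rockland property would fail. The remaining subtlety, promoting fibrewise commutation to commuting spectral resolutions for the unbounded operators, is handled cleanly by the observation that a single decomposition $\cF_\l=\bigoplus_\a P_\a$ diagonalizes all the $d\pi_\l(V_j)$ at once.
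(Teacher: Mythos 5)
Your proposal is correct and takes essentially the same route as the paper: Rockland injectivity checked on the characters via the strict positivity of $\rho_j$ from property~(\ref{tre}) of Lemma~\ref{autinteri} and on the Bargmann representations via the decomposition $\cF_{|\l|}=\bigoplus_{\a}P_\a$ with nonzero scalar eigenvalues $\widehat{V_j}(\phi_{\l,\a})$, positivity via the Plancherel formula, and commuting spectral resolutions built fibrewise from the projections $\Pi_{\l,\a}$, which is precisely the paper's explicit construction in~(\ref{resolution}). The only difference is one of rigor, not of method: the paper characterizes the G\aa rding space of $\pi_\l$ so that the expansion $V_jF=\sum_{\a}\widehat{V_j}(\phi_{\l,\a})F_\a$ is norm-convergent before projecting onto each $P_\a$, a justification your argument elides but which fills in routinely.
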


\begin{proof} Suppose that $j=1,\ldots,d$.
The  formal self-adjointness of $V_j$ follows 
directly from  its definition and the identity
$\int_{\Hn} Z_k f\, \overline g
=-\int_{\Hn} f\,  Z_k \overline g$, for every pair of Schwartz functions $f$ and $g$ on $\Hn$.

We check now the injectivity condition on the image of $V_j$ in the nontrivial irreducible unitary representations of $H_n$.

By (\ref{tre}) in Lemma~\ref{autinteri}, $d\tau_w(V_j)=\rho_j(w)>0$ for $w\neq0$.

As to the Bargmann representations $\pi_\l$, the G\aa rding space in $\cF_{|\l|}$ can be characterized
as the space of those $F=\sum_{\alpha\in\Lambda}F_\alpha$ (with $F_\alpha$ in $P_\alpha$ for $\l>0$ and in $P_{\alpha'}$ for $\l<0$) such that
$$
\sum_{\alpha\in\Lambda}(1+|\alpha|)^N\|F_\alpha\|_{\cF_{|\l|}}^2<\infty
$$
for every integer $N$. For such an $F$,
$$
V_jF=\sum_{\alpha\in\Lambda}\widehat{V_j}(\phi_{\l,\alpha})F_\alpha
$$
where the series is convergent in norm, and therefore it is zero if and only if $F_\alpha=0$ for every $\alpha$.

Positivity of $V_j$ follows from Plancherel's formula: for $f\in\cS(H_n)$,
$$
\int_{\Hn} V_jf\, \bar f=
\left(\frac1{2\pi}\right)^{n+1}
\int_0^{+\infty}\sum_{\alpha\in\Lambda}\widehat{V_j}(\phi_{\l,\alpha})\big(\|\pi_\l(f)_{|_{P_\alpha}}\|_{HS}^2+\|\pi_{-\l}(f)_{|_{P_{\alpha'}}}\|_{HS}^2\big)\,\l^n\,d\l\ge0\ ,
$$
since the eigenvalues  $\widehat{V_j}(\phi_{\lambda,\a})$ are  positive.

Given a Borel subset $\omega$ of $\R^+$, define the operator $E_j(\omega)$ on $L^2(H_n)$ by
\begin{equation}\label{resolution}
\pi_\l\big(E_j(\omega)f\big)=\sum_{\alpha\in\Lambda}
\chi_\omega\big(\widehat{V_j}(\phi_{\l,\alpha})\big))\pi_\l(f)\Pi_{\l,\alpha}\ ,
\end{equation}
where $\chi_\omega$ is the characteristic function of 
$\omega$ and  $\Pi_{\l,\alpha}$
is the orthogonal projection
of $\cF_{|\l|}$ onto $P_\alpha$ if $\l>0$, or onto $P_{\alpha'}$ if $\l<0$. Then $E_j=\{E_j(\omega)\}$ defines, for each $j$, a resolution of the identity, and, for $f\in\cS(H_n)$,
$$
\int_0^{+\infty}\xi\,dE_j(\xi)f=V_jf\ .
$$

Therefore $E_j$ is the spectral resolution of the self-adjoint extension of $V_j$.
It is then clear that $E_j(\omega)$ and $E_k(\omega')$ commute for every $\omega,\omega'$ and $j,k$.
\end{proof}

\begin{corollary}\label{corveneruso}
Let $V_0,\ldots, V_d$ be the differential operators
defined in Lemma~\ref{autinteri}. If $m$ is in $ \cS(\R^{d+1})$, then
 there exists $M$ in $\schK$ such that
$$
m(V_0,\ldots, V_d)f=f\ast M\qquad \forall f \in\cS(\Hn).
$$
Moreover, for every $p$ there exists
$q$ such that
$$
\|M\|_{(p,\Hn)}\leq C\, \|m\|_{(q,\R^{d+1})}
$$
\end{corollary}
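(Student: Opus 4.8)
The plan is to mimic the inductive step in the proof of Theorem~\ref{Veneruso}, treating $V_0=-iT$ as one extra variable, the essential new point being that $V_0$ is central and self-adjoint with spectrum all of $\R$, rather than a positive Rockland operator. By Lemma~\ref{verificaRockland} the operators $V_1,\dots,V_d$ form a commutative family of positive Rockland operators, so Theorem~\ref{Veneruso} applies to them. Since $V_0$ is central, it commutes with each $V_j$, so the family $V_0,V_1,\dots,V_d$ admits a joint spectral resolution and, for $m\in\cS(\R^{d+1})$, the operator $m(V_0,\dots,V_d)$ is well defined.

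First I would record how $V_0$ acts by functional calculus. Writing points of $\Hn$ as $(t,z)$, one has $V_0=-i\partial_t$, whose spectral resolution is obtained from the Fourier transform in the central variable $t$. Consequently, for $\varphi\in\cS(\R)$, the operator $\varphi(V_0)$ is convolution in $t$ against a function $g_\varphi\in\cS(\R)$ (an inverse Fourier transform of $\varphi$); equivalently, $\varphi(V_0)f=f\ast(g_\varphi\otimes\delta_0)$, where $g_\varphi\otimes\delta_0$ denotes the central distribution $(u,w)\mapsto g_\varphi(u)\,\delta_0(w)$. Although this kernel is not a Schwartz function, convolving it with any $\Psi\in\cS(\Hn)$ produces $(t,z)\mapsto\int_\R\Psi(t-u,z)\,g_\varphi(u)\,du$, which is again in $\cS(\Hn)$; moreover, since left-invariant vector fields commute with convolution in $t$ and the homogeneous gauge is subadditive, the map $\Psi\mapsto\Psi\ast(g_\varphi\otimes\delta_0)$ satisfies $\|\Psi\ast(g_\varphi\otimes\delta_0)\|_{(p,\Hn)}\le C\,\|\Psi\|_{(p,\Hn)}\,\|\varphi\|_{(p',\R)}$ for a suitable $p'$ (cf. \cite[Proposition 1.47]{FS}).

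Next, exactly as in the proof of Theorem~\ref{Veneruso}, I would decompose $m\in\cS(\R^{d+1})$, separating the $V_0$-variable: there exist $\varphi_k\in\cS(\R)$ and $\psi_k\in\cS(\R^d)$ with $m(\lambda_0,\lambda_1,\dots,\lambda_d)=\sum_k\varphi_k(\lambda_0)\,\psi_k(\lambda_1,\dots,\lambda_d)$ and $\sum_k\|\varphi_k\otimes\psi_k\|_{(N,\R^{d+1})}<\infty$ for every $N$. Applying Theorem~\ref{Veneruso} to $V_1,\dots,V_d$ gives $\Psi_k\in\cS(\Hn)$ with $\psi_k(V_1,\dots,V_d)f=f\ast\Psi_k$, whence, using the previous paragraph,
\[
\varphi_k(V_0)\,\psi_k(V_1,\dots,V_d)\,f=f\ast\Psi_k\ast(g_{\varphi_k}\otimes\delta_0)=f\ast M_k,\qquad M_k:=\Psi_k\ast(g_{\varphi_k}\otimes\delta_0)\in\cS(\Hn).
\]
Combining the estimate above with the continuity statement of Theorem~\ref{Veneruso} yields $\|M_k\|_{(p,\Hn)}\le C\,\|\psi_k\|_{(q,\R^d)}\,\|\varphi_k\|_{(p',\R)}\le C\,\|\varphi_k\otimes\psi_k\|_{(q',\R^{d+1})}$, so that $M:=\sum_k M_k$ converges in $\cS(\Hn)$.

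Finally I would identify $M$ as the kernel of $m(V_0,\dots,V_d)$ and conclude. On one side $\sum_k f\ast M_k=f\ast M$ in $\cS(\Hn)$; on the other, by the Spectral Theorem $\sum_k\varphi_k(V_0)\,\psi_k(V_1,\dots,V_d)f$ converges in $L^2(\Hn)$ to $m(V_0,\dots,V_d)f$, so $m(V_0,\dots,V_d)f=f\ast M$. The kernel $M$ is $K$-invariant: every $V_j$ is $K$-invariant, hence so is $m(V_0,\dots,V_d)$, and the convolution kernel of a left-invariant, $K$-invariant operator is $K$-invariant; thus $M\in\schK$. The continuity of $m\mapsto M$ then follows from the Closed Graph Theorem, verbatim as at the end of the proof of Theorem~\ref{Veneruso}. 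The one genuinely new ingredient, and the only real obstacle, is the treatment of the non-positive central operator $V_0$: its functional calculus is convolution in the central variable, which has no Schwartz kernel in isolation but preserves $\cS(\Hn)$ once composed with the Schwartz kernels coming from the Rockland part.
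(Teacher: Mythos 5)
Your proof is correct, but it takes a genuinely different route from the paper's. The paper disposes of the non-Rockland central operator $V_0=-iT$ with a two-line algebraic trick: it replaces $V_0$ by $\tilde V_0=-iT+2\mathcal L$, which by (\ref{scomposL}) is a linear combination of $V_0,V_1,\ldots,V_d$, notes that $\tilde V_0$ is a positive Rockland operator (citing Folland--Stein) commuting with the other $V_j$, and then applies Theorem~\ref{Veneruso} verbatim to the family $\tilde V_0,V_1,\ldots,V_d$, with $m$ replaced by its composition $\tilde m$ with the corresponding linear change of variables on $\R^{d+1}$. You instead attack $V_0$ head-on: you observe that its functional calculus is convolution in the central variable against the inverse Fourier transform of the multiplier, and you rerun the tensor-decomposition step from the proof of Theorem~\ref{Veneruso} with one factor being this central convolution, checking that composing it with the Schwartz kernels $\Psi_k$ coming from the Rockland family $V_1,\ldots,V_d$ stays Schwartz with the right norm control. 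What the paper's argument buys is brevity and reuse of Theorem~\ref{Veneruso} as a black box, at the price of invoking the structural identity (\ref{scomposL}) and the nontrivial fact that $-iT+2\mathcal L$ is positive Rockland; what your argument buys is independence from both of those ingredients --- it would work for any commutative family of positive Rockland operators augmented by the central derivative, in effect extending Theorem~\ref{Veneruso} to allow one ``Fourier multiplier'' variable --- at the price of redoing the summation and identification-of-kernel steps. Two minor remarks: your explicit verification that $M$ is $K$--invariant (via $K$--invariance of the operator) fills in a point the paper leaves implicit, and both arguments rely, at the same level of informality, on the fact that the relevant spectral resolutions genuinely commute, which is ultimately checked through the representation-theoretic description (\ref{resolution}).
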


\begin{proof}  
We replace $V_0=-iT$ by $\tilde V_0=-iT+2\mathcal L$. By (\ref{scomposL}),
$\tilde V_0$ is a linear combination of the $V_j$. Therefore $m(V_0,\ldots, V_d)=\tilde m(\tilde V_0,\ldots, V_d)$, where $\tilde m$ is the composition of $m$ with a linear transformation of $\R^{d+1}$.

Moreover, $\tilde V_0$ is a positive Rockland operator (cf. \cite{FSarticolo}), which commutes with the other $V_j$ because so do $V_0$ and $\mathcal L$.
Applying Theorem \ref{Veneruso},
$$
\tilde m(\tilde V_0,\ldots, V_d)f=f\ast M\ ,
$$
with $M\in\cS_K(H_n)$ and 
$$
\|M\|_{(p,\Hn)}\leq C\, \|\tilde m\|_{(q,\R^{d+1})}\leq C'\, \|m\|_{(q,\R^{d+1})}\ .
$$
\end{proof}
%


\begin{theorem}\label{main1}
Suppose that $m$ is a Schwartz function on $\R^{d+1}$. Then there exists
a function 
$M$ in $\schK$ such that $\widehat M=m_{|_{\Sigma_K^V}}$. Moreover the map
$m\longmapsto M$ is a continuous linear 
operator from $\cS(\R^{d+1})$ to $\schK$.
\end{theorem}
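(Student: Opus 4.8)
The plan is to obtain $M$ from the functional calculus already established and then to identify its Gelfand transform. First I would apply Corollary~\ref{corveneruso} to the given $m\in\cS(\R^{d+1})$: this produces a function $M\in\schK$ with $f*M=m(V_0,\ldots,V_d)f$ for every $f\in\cS(\Hn)$, together with the estimate $\|M\|_{(p,\Hn)}\le C\,\|m\|_{(q,\R^{d+1})}$. That estimate is precisely the continuity of $m\mapsto M$ as a map from $\cS(\R^{d+1})$ to $\schK$, so the ``moreover'' part is immediate, and the entire content of the theorem reduces to the pointwise identity $\widehat M=m_{|_{\Sigma_K^V}}$ on the spectrum.

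To prove this identity I would exploit continuity and density. The Gelfand transform of an $L^1_K(\Hn)$ function is continuous on the Gelfand spectrum, which by \cite{FR} is homeomorphic to $\Sigma_K^V$ with its Euclidean topology; hence $\widehat M$ is continuous on $\Sigma_K^V$, and so is $m_{|_{\Sigma_K^V}}$. Since $\Sigma'$ is dense in $\Sigma_K^V$, it suffices to check $\widehat M(\phi_{\l,\alpha})=m\big(\widehat V(\phi_{\l,\alpha})\big)$ for every $\l\ne0$ and $\alpha\in\Lambda$, that is, only on the Bargmann part of the spectrum. (Alternatively one could treat the degenerate part $\Sigma_0$ directly through the one-dimensional representations $\tau_w$, where $d\tau_w(V_0)=0$ and $d\tau_w(V_j)=\rho_j(w,\bar w)$, but density makes this unnecessary.)

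The key computation is carried out inside $\pi_\l$. Because $M$ is $K$--invariant, $\pi_\l(M)$ commutes with $\nu_+(k)$ for all $k\in K$, hence preserves each $P_\alpha$ and, by Schur's lemma, acts there as a scalar; by (\ref{sferiche}) this scalar is exactly $\widehat M(\phi_{\l,\alpha})$. It therefore remains to show that $\pi_\l(M)_{|_{P_\alpha}}=m\big(\widehat V(\phi_{\l,\alpha})\big)\,\Pi_{\l,\alpha}$. Applying $\pi_\l$ to $f*M=m(V_0,\ldots,V_d)f$ and using $\pi_\l(f*M)=\pi_\l(f)\,\pi_\l(M)$, I would invoke the explicit spectral resolutions built in Lemma~\ref{verificaRockland}: formula~(\ref{resolution}) shows that each $E_j(\omega)$ is carried by $\pi_\l$ to the spectral projection $\sum_\alpha\chi_\omega\big(\widehat{V_j}(\phi_{\l,\alpha})\big)\Pi_{\l,\alpha}$, with $\Pi_{\l,\alpha}$ the projection onto $P_\alpha$ for $\l>0$ and onto $P_{\alpha'}$ for $\l<0$. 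Consequently the joint spectral measure of $(V_0,\ldots,V_d)$ is supported, on the subspace $P_\alpha$, at the single point $\widehat V(\phi_{\l,\alpha})$, so that the joint functional calculus $m(V_0,\ldots,V_d)$ acts under $\pi_\l$ on $P_\alpha$ as multiplication by $m\big(\widehat V(\phi_{\l,\alpha})\big)$. Matching this with $\pi_\l(f)\,\pi_\l(M)$ yields the required scalar, whence $\widehat M(\phi_{\l,\alpha})=m\big(\widehat V(\phi_{\l,\alpha})\big)$.

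The main obstacle is exactly this last step: reconciling the group--level joint functional calculus $m(V_0,\ldots,V_d)$, defined through the spectral theorem, with the representation--level action on each finite--dimensional joint eigenspace $P_\alpha$. Here one must be careful that $V_0=-iT$ is not positive, so the calculus is realized via the positive Rockland operator $\tilde V_0=-iT+2\mathcal L$ through $m(V_0,\ldots,V_d)=\tilde m(\tilde V_0,\ldots,V_d)$ as in Corollary~\ref{corveneruso}; since $\tilde V_0$ is again diagonalized by the same projections $\Pi_{\l,\alpha}$, with eigenvalue a fixed polynomial function of $\widehat V(\phi_{\l,\alpha})$, this substitution does not affect the conclusion. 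The compatibility of all these spectral resolutions, guaranteed by their explicit form~(\ref{resolution}), is what permits the convention--free identification of scalars; once it is in hand, the theorem follows.
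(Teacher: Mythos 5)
Your proposal is correct and follows essentially the same route as the paper's proof: apply Corollary~\ref{corveneruso} to get $M$ and the continuity estimate, then use the explicit spectral resolutions~(\ref{resolution}) to identify $\pi_\l\bigl(m(V_0,\ldots,V_d)f\bigr)$ with $\pi_\l(f)\,\pi_\l(M)$ on each $P_\alpha$, and conclude by density of $\Sigma'$ in $\Sigma_K^V$. Your added details (Schur's lemma identifying the scalar of $\pi_\l(M)_{|_{P_\alpha}}$ as $\widehat M(\phi_{\l,\alpha})$, and the care taken with $\tilde V_0=-iT+2\mathcal L$) simply make explicit steps the paper leaves implicit.
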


\begin{proof} 
It follows from (\ref{resolution})
 that the joint spectrum of $V_0,\dots,V_d$ is $\Sigma_K^V$ (cf. \cite[Theorem 1.7.10]{GV}). Therefore the continuous map $m\mapsto M$ of Corollary \ref{corveneruso} passes to the quotient modulo $\{m:m=0 \text{ on }\Sigma_K^V\}$.

On the other hand, by (\ref{resolution}),
$$
\pi_\l\big( m(V_0,\ldots, V_d)f\big)=\sum_{\alpha\in\Lambda}
m\big(\widehat{V}(\phi_{\l,\alpha})\big)\pi_\l(f)\text{proj}_{\l,\alpha}\ ,
$$
and this must coincide with $\pi_\l(f)\pi_\l(M)$. It follows that
$\widehat M\big(\widehat{V}(\phi_{\l,\alpha})\big)=m\big(\widehat{V}(\phi_{\l,\alpha})\big)$ for every $\l,\alpha$. By density, $\widehat M=m_{|_{\Sigma_K^V}}$.
\end{proof}

\section{Extension of Schwartz invariant functions on $\R^m$}
\label{Mather}

Suppose that $K$ is a compact Lie group acting 
orthogonally on $\R^m$.
 It follows from Hilbert's Basis Theorem \cite{W} that
the algebra of $K$--invariant polynomials on $\R^m$ is
finitely generated. Let $\rho_1,\ldots,\rho_d$
be a set of generators
and denote by $\rho=(\rho_1,\ldots,\rho_d)$
the corresponding map from $\R^m$ to $\R^d$. The image $\Sigma=\rho(\R^m)$ of $\rho$ is closed in $\R^d$.

If $h$
is a smooth function on $\R^d$, then $h\circ \rho$
is in $C^\infty_K(\R^m)$, the space of $K$--invariant smooth
functions on $\R^m$.
G. Schwarz \cite{S} proved that
the map $h\mapsto h\circ\rho$ is  surjective
from $C^\infty(\R^d)$ to $C^\infty_K(\R^m)$, so that, passing to the quotient, it establishes an isomorphism between $C^\infty(\Sigma)$ and $C^\infty_K(\R^m)$.

J. Mather~\cite{Ma} proved that the map $h\mapsto h\circ\rho$ is split-surjective, i.e. there is a continuous linear operator $\mathcal E:C^\infty_K(\R^m)\rightarrow C^\infty(\R^d)$ such that $(\mathcal E f)\circ\rho=f$ for every $f\in C^\infty_K(\R^m)$.

From this one can derive the following analogue of the Schwarz--Mather theorem for 
$\cS_K(\R^m)$.

 \begin{theorem}\label{Schwarz-Mather}
 There is a continuous linear operator $\mathcal E': \cS_K(\R^m)\rightarrow \cS(\R^d)$ such that $(\mathcal E' g)\circ\rho=g$ for every $g\in \cS_K(\R^m)$.
 In particular, the map
$g\mapsto g\circ \rho $ is an isomorphism between 
$  \cS(\Sigma)$ and $  \cS_K(\R^{m})$. 
\end{theorem}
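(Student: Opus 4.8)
The plan is to promote Mather's $C^\infty$ splitting $\mathcal E$ to the Schwartz category. The only real difficulty is that $\mathcal E\colon C^\infty_K(\R^m)\to C^\infty(\R^d)$ is continuous merely for uniform convergence of derivatives on compact sets, so it gives no control at infinity and $\mathcal E g$ need not decay when $g\in\cS_K(\R^m)$. The idea is to feed $\mathcal E$ only the pieces of $g$ supported in a fixed compact annulus and to recover decay by a rescaling adapted to the dilations. We may assume the $\rho_j$ homogeneous of degrees $a_1,\dots,a_d\ge1$: since $K$ acts linearly such generators exist, and the passage between two generating systems is handled exactly as in Corollary~\ref{cambiogen}, through the continuous Schwartz extension operators produced in the proof of Lemma~\ref{mappapropria2}. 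We then equip $\R^d$ with the dilations $\delta_t(\xi)=(t^{a_1}\xi_1,\dots,t^{a_d}\xi_d)$, so that $\rho(tx)=\delta_t\rho(x)$ and $\Sigma$ is $\delta_t$-invariant.

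Since $|x|^2$ is $K$-invariant we may write $|x|^2=\sigma(\rho(x))$ for a polynomial $\sigma$; in particular $\rho$ vanishes only at the origin, so $\rho(\{1/4\le|x|\le4\})$ is compact and disjoint from $0$. Fix a $K$-invariant dyadic partition $\sum_{k\in\Z}\phi(2^{-k}x)=1$ ($x\ne0$) with $\phi$ supported in $\{1/2\le|x|\le2\}$, and a cutoff $\Psi\in C^\infty_c(\R^d)$ equal to $1$ on $\rho(\{1/4\le|x|\le4\})$ and supported away from $0$. For $g\in\cS_K(\R^m)$ write $g_k=\phi(2^{-k}\cdot)\,g$ and rescale $\tilde g_k(y)=g_k(2^ky)=\phi(y)\,g(2^ky)$, a $K$-invariant function supported in the fixed annulus. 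Set $H_k=\Psi\cdot\mathcal E(\tilde g_k)$; then $\supp H_k\subset L_0:=\supp\Psi$ and $H_k\circ\rho=(\Psi\circ\rho)\,\tilde g_k=\tilde g_k$, since $\Psi\circ\rho\equiv1$ on $\supp\tilde g_k$. Undoing the dilation, $h_k:=H_k\circ\delta_{2^{-k}}$ satisfies $h_k\circ\rho=g_k$. I would then define
\[
\mathcal E'g=\Theta\,\mathcal E(g)+(1-\Theta)\sum_k h_k,
\]
with $\Theta\in C^\infty(\R^d)$ equal to $1$ near $0$. As $\mathcal E(g)\circ\rho=g$ identically, while $\sum_kh_k\circ\rho=g$ wherever the partition is complete, in particular on the set where $(1-\Theta)\circ\rho\ne0$ (which avoids $x=0$), one checks that $(\mathcal E'g)\circ\rho=g$.

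The hard part is the Schwartz estimate for the tail. Because $\tilde g_k$ sits in the fixed annulus, where $g(2^k\cdot)$ decays like $(1+2^k)^{-Q}$ (the derivative factors $2^{k|\beta|}$ being absorbed into a higher seminorm of $g$), continuity of $\mathcal E$ with the target compact set fixed to $L_0$ yields, for every $Q$,
\[
\|H_k\|_{(p,\R^d)}\le C_{p,Q}\,(1+2^k)^{-Q}\,\|g\|_{(p+Q,\R^m)}.
\]
On $\supp h_k=\delta_{2^k}L_0$ one has $1+|\xi|\le C\,2^{kA}$ for $k\ge0$, where $A=\max_j a_j$, while the chain rule contributes only factors $2^{-ka_j\alpha_j}\le1$; hence $\|h_k\|_{(p,\R^d)}\le C_p\,2^{kAp}\,(1+2^k)^{-Q}\,\|g\|_{(p+Q,\R^m)}$, and the choice $Q=Ap+2$ makes $\sum_{k\ge0}\|h_k\|_{(p,\R^d)}$ converge, dominated by $\|g\|_{(p+Ap+2,\R^m)}$. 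For $k$ sufficiently negative $\delta_{2^k}L_0\subset\{\Theta=1\}$, so $1-\Theta$ annihilates those terms; since $L_0$ is an annular set away from $0$, the supports $\delta_{2^k}L_0$ overlap boundedly, the sum is locally finite, and $(1-\Theta)\sum_k h_k\in\cS(\R^d)$ with seminorms controlled by those of $g$. The term $\Theta\,\mathcal E(g)$ is compactly supported, hence Schwartz with seminorms bounded by a single $C^\infty$ seminorm of $g$ on a fixed compact, i.e. by a Schwartz seminorm of $g$. This exhibits $\mathcal E'$ as a continuous linear section.

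It remains to deduce the ``in particular'' clause, which is formal. The map $g\mapsto g\circ\rho$ from $\cS(\Sigma)$ to $\cS_K(\R^m)$ is well defined and continuous, because $|x|^2=\sigma(\rho(x))$ gives $1+|x|\le C(1+|\rho(x)|)^{N}$, so composition with $\rho$ carries a Schwartz function to a Schwartz one. It is injective as $\rho(\R^m)=\Sigma$, and surjective with continuous inverse $f\mapsto(\mathcal E'f)|_\Sigma$; hence it is a topological isomorphism.
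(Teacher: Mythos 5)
Your proposal is correct and follows essentially the same route as the paper's own proof: reduce to a homogeneous Hilbert basis via Lemma~\ref{mappapropria2}, exploit the homogeneity $\rho\circ(\text{dilation})=\delta_t\circ\rho$, decompose $g$ dyadically into $K$--invariant pieces supported in fixed annuli, apply Mather's operator $\mathcal E$ (whose continuity on a fixed compact gives uniform $C^q$ control), rescale back, and sum with geometric bookkeeping of the Schwartz seminorms. The only differences are cosmetic: you localize in the target with a single fixed cutoff $\Psi$ rescaled by $\delta_{2^k}$ plus a separate term $\Theta\,\mathcal E(g)$ near the origin, whereas the paper glues the rescaled operators $\mathcal E_{R^j}$ with a partition of unity $\{\psi_j\}$ on $\R^d$ adapted to the dilations.
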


\begin{proof} It follows from Lemma \ref{mappapropria2} that the validity of the statement is independent of the choice of the Hilbert basis $\rho$. We can then assume that the polynomials $\rho_j$ are homogeneous of degree $\a_j$.

On $\R^d$ we define anisotropic dilations by the formula
$$
\delta_r (y_1,\ldots,y_k)=(r^{\a_1}\,y_1,\ldots,r^{\a_d}\,y_d)
\qquad \forall r>0\ ,
$$
and we shall denote by $|\cdot|_{\a}$ a corresponding 
homogeneous gauge, e.g.
\begin{equation}\label{gauge}
|y|_{\a}=c\sum_{j=1}^d |y_j|^{1/\a_j}\ ,
\end{equation}
satisfying $|\delta_r y|_{\a}=  r |y|_{\a}$.

On $\R^m$ we keep isotropic dilations, given by scalar multiplication. 
Clearly, $\rho$ is homogeneous of degree 1 with respect to these dilations, i.e.,
$$
\rho(rx)=\delta_r\bigl(\rho(x)\bigr)
\qquad\forall r>0,\quad x\in \R^m\ ,
$$
and $\Sigma$ is $\delta_r$--invariant for all $r>0$.

Since $\rho$ is continuous,
the image under $\rho$ of the unit sphere in $\R^m$
is a compact set not containing 0 (in fact, since $|x|^2$ is a polynomial in the $\rho_j$, cf. \cite{Ma}, $\rho_j(x)=0$ for every $j$ implies that $x=0$).

Choosing the constant $c$ in (\ref{gauge}) appropriately, we can assume that $1\leq |\rho(x)|_{\a} \leq R$ for every $x$ in the unit sphere in $\R^m$. It follows by homogeneity that for every $a,b$, $0\le a<b$,
\begin{equation}\label{supporto}
\rho\left(\{x: a\leq |x|\leq b\}\right) \subset \{y\,:\, a\leq |y|_{\a}\leq Rb\}.
\end{equation}

Fix $\mathcal E:C^\infty(\R^m)_K\rightarrow C^\infty(\R^d)$ a continuous linear operator satisfying the condition $(\mathcal E f)\circ\rho=f$, whose existence is guaranteed by Mather's theorem. 

Denote by $B_s$ the subset of $\R^d$ where $|y|_{\a}<s$. For every $p\in\N$ there is $q\in N$ such that, for $f$ supported in the unit ball,
$$
\|\mathcal Ef\|_{C^p(B_{R^2})}<C_p\,\|f\|_{C^q}\ .
$$

Given $r>0$, set $f_r(x)=f(rx)$ and
$$
\mathcal E_rf=(\mathcal Ef_r)\circ\delta_{r^{-1}}\ .
$$

By the homogeneity of $\rho$, $(\mathcal E_r f)\circ\rho=f$.
For $r>1$ and $f$ supported on the ball of radius $r$, we have
\begin{equation}\label{stimaE}
\|\mathcal E_rf\|_{C^p(B_{rR^2})}<C_p\, r^q\, \|f\|_{C^q}\ .
\end{equation}

Let $\{\varphi_j\}_{j\ge0}$ be a partition of unity
on $\R^m$ consisting of radial smooth functions such that
\begin{enumerate}
\item[{($a$) }] $\varphi_0$ is supported on $\{x:|x|<1\}$;
\item[{($b$) }] for $j\ge1$, $\varphi_j$ is supported on $\{x:R^{j-2}<|x|<R^j\}$;
\item[{($c$) }] for $j\ge1$, $\varphi_j(x)=\varphi_1(R^{-(j-1)}x)$.
\end{enumerate}

Similarly, let $\{\psi_j\}_{j\ge0}$ be a partition of unity on $\R^d$ consisting of smooth functions such that
\begin{enumerate}
\item[{($a'$) }] $\psi_0$ is supported on $\{y:|y|_{\a}<R\}$;
\item[{($b'$) }] for $j\ge1$, $\psi_j$ is supported on $\{y:R^{j-1}<|y|_{\a}<R^{j+1}\}$;
\item[{($c'$) }] for $j\ge1$, $\psi_j(y)=\psi_1(\delta_{R^{-(j-1)}}y)$.
\end{enumerate}

For $f\in \cS_K(\R^m)$ define

$$
\mathcal E'f(y)=\sum_{j=0}^\infty\sum_{\ell=-2}^1\psi_{j+\ell}(y)\mathcal E_{R^j}(\varphi_jf)=\sum_{j=0}^\infty\psi_j(y)\sum_{\ell=-2}^1\mathcal E_{R^{j-\ell}}(\varphi_{j-\ell}f)\ ,
$$
with the convention that $\psi_{-1}=\psi_{-2}=\varphi_{-1}=0$.
Then
$$
\mathcal E'f\big(\rho(x)\big)=\sum_{j=0}^\infty\sum_{\ell=-2}^1\psi_{j+\ell}\big(\rho(x)\big)\varphi_j(x)f(x)\ .
$$

By (\ref{supporto}), $\sum_{\ell=-2}^1\psi_{j+\ell}\big(\rho(x)\big)=1$ on the support of $\varphi_j$, hence $\mathcal E'f\circ\rho=f$.

We have the following estimate for the Schwartz norms in (\ref{normeSchwartz}):
$$
\|f\|_{(p,\R^m)}\sim \sum_{j=0}^\infty R^{jp}\|f\varphi_j\|_{C^p}\ .
$$

On $\R^d$ we adapt the Schwartz norms to the dilations $\delta_r$ by setting
$$
\|g\|'_{(p,\R^d)}=\sup_{y\in\R^d,\sum a_j\alpha_j\le p}(1+|y|_{\a})^p\big|\partial^\alpha g(y)\big|\ .
$$

We then have
$$
\|g\|'_{(p,\R^d)}\sim\sum_{j=0}^\infty R^{jp}\sup_{\sum a_j\alpha_j\le p}\|\partial^\alpha g\,
\psi_j\|_\infty
\lesssim \sum_{j=0}^\infty R^{jp}\|g\psi_j\|_{C^p}\ .
$$

Therefore
$$
\begin{aligned}
\|\mathcal E'f\|'_{(p,\R^m)}&\le C\sum_{j=0}^\infty R^{jp}\Big\|\psi_j\sum_{\ell=-2}^1\mathcal E_{R^{j-\ell}}(\varphi_{j-\ell}f)\Big\|_{C^p}\\
&\le C\sum_{j=0}^\infty\sum_{\ell=-2}^1 R^{jp}\|\mathcal E_{R^{j-\ell}}(\varphi_{j-\ell}f)\|_{C^p(B_{R^{j+1}})}\\
&=C\sum_{j=0}^\infty\sum_{\ell=-2}^1 R^{jp}\|\mathcal E_{R^j}(\varphi_jf)\|_{C^p(B_{R^{j+\ell+1}})}\\
&=C\sum_{j=0}^\infty R^{jp}\|\mathcal E_{R^j}(\varphi_jf)\|_{C^p(B_{R^{j+2}})}\ .
\end{aligned}
$$

By (\ref{stimaE}), since $\varphi_jf$ is supported on the ball of radius $R^j$,
$$
\|\mathcal E'f\|'_{(p,\R^m)}\le C_p\sum_{j=0}^\infty R^{j(p+q)}\|\varphi_jf\|_{C^q}\le C_p\|f\|_{(p+q,\R^m)}\ .
$$
\end{proof}

\section{Schwartz extensions of the Gelfand transform of $f\in\cS_K(H_n)$}
\label{viceversa}

In this section we suppose that $K$ is a closed connected subgroup
of $\Un$. The following theorem settles the proof of Theorem~\ref{main}
in this case.

\begin{theorem}\label{main2} Let  $f$ be in $\cS_K(\Hn)$. 
For every $p$ in $\N$ there exist $F_p$ in $\cS(\R^{d+1})$ and $q$ in $\N$, both depending on $p$,  such that  
${F_p}_{|_{\Sigma_K^V}}=\widehat f$
and
$
\left\|
F_p
\right\|_{(p,\R^{d+1})}\leq C_p\,\|f\|_{(q,\Hn)}.
$
\end{theorem}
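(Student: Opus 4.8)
The plan is to reconstruct $\widehat f$ separately on the two pieces of the spectrum, following the splitting $\Sigma_K^V=\Sigma_0\cup\Sigma'$ with $\Sigma_0=\{0\}\times\rho(\C^n)$ and $\Sigma'$ the union of the curves $\Gamma_\alpha$, and then to glue. I would first understand the restriction of $\widehat f$ to the degenerate slice $\Sigma_0$. Integrating out the central variable, set $g(z)=\int_\R f(t,z)\,dt$, a $K$--invariant Schwartz function on $\C^n\cong\R^{2n}$; since $\eta_{Kw}(t,z)=\int_Ke^{i\RE\langle z,kw\rangle}\,dk$ is independent of $t$ and $\widehat g$ is $K$--invariant, one obtains
$$
\widehat f\bigl(0,\rho(w,\bar w)\bigr)=\int_{\Hn} f\,\eta_{Kw}=\widehat g(w),
$$
where $\widehat g$ denotes the Euclidean Fourier transform of $g$, again a $K$--invariant Schwartz function. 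Thus the restriction of $\widehat f$ to $\Sigma_0$ is, under the Hilbert map $\rho$, the image of a function in $\cS_K(\R^{2n})$, and Theorem~\ref{Schwarz-Mather} produces a function in $\cS(\R^d)$ extending it, continuously in the Schwartz norms.

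The transverse ($\lambda$) direction is recovered through the Taylor jet. By Geller's theorem (Theorem~\ref{geller}), $f$ determines at each point of $\Sigma_0$ a sequence of ``Taylor coefficients'' describing the behaviour of $\widehat f$ as $\lambda\to0$ along the curves $\Gamma_\alpha$; by the same computation as above each coefficient is the pullback by $\rho$ of a $K$--invariant Schwartz function, hence extends to $\cS(\R^d)$ via Theorem~\ref{Schwarz-Mather}. Collecting these extensions yields a Schwartz jet along $\Sigma_0\subset\{\lambda=0\}$, prescribing $\partial_\lambda^k G$ on $\Sigma_0$ for every $k$. To this jet I would apply the Schwartz version of the Whitney extension theorem (Proposition~\ref{whitney}), obtaining $G\in\cS(\R^{d+1})$ whose jet along $\Sigma_0$ agrees with that of $\widehat f$, with a bound $\|G\|_{(p,\R^{d+1})}\le C_p\,\|f\|_{(q,\Hn)}$.

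It then remains to correct $G$ on $\Sigma'$. Since $G\in\cS(\R^{d+1})$, Theorem~\ref{main1} gives $M\in\schK$ with $\widehat M=G|_{\Sigma_K^V}$, depending continuously on $G$ and hence on $f$; setting $h=f-M$, the transform $\widehat h=\widehat f-G|_{\Sigma_K^V}$ vanishes to infinite order along $\Sigma_0$, i.e. $h$ has trivial jet. For such $h$ I would invoke the explicit interpolation formula of Proposition~\ref{estensioneCk}, which, exploiting that $\Sigma'\cap\{\lambda=1\}$ lies in the integer lattice and that the generalized binomial coefficients are rational (formula~\ref{estensioneCk}), produces a Schwartz extension of $\widehat h$ to $\R^{d+1}$ with norm control by a fixed Schwartz norm of $h$. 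Adding this extension to $G$ gives a Schwartz function $F$ with $F|_{\Sigma_K^V}=\widehat f$, and composing the continuity estimates of Theorems~\ref{Schwarz-Mather} and~\ref{main1} and Propositions~\ref{whitney} and~\ref{estensioneCk} yields $\|F\|_{(p,\R^{d+1})}\le C_p\,\|f\|_{(q,\Hn)}$ with $q=q(p)$; this furnishes the required $F_p$.

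The hard part is the final step: the trivial--jet interpolation on the countably many curves $\Gamma_\alpha$ filling $\Sigma'$. One must manufacture a globally smooth, rapidly decaying function on $\R^{d+1}$ interpolating the values $\widehat h(\Gamma_\alpha(\lambda))$ across $\lambda=0$, precisely where the curves accumulate onto $\Sigma_0$. The rapid decay of $\widehat h$ in the discrete parameter $\alpha$ together with the infinite--order vanishing at $\lambda=0$ must be balanced against the growth of the interpolating kernels; here the rationality of the generalized binomial coefficients — with denominators cleared by a single integer $N$, exactly as in the choice of generators in Lemma~\ref{autinteri} — is what keeps the Schwartz norms of the interpolant finite. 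The multidimensionality of $\Sigma'$ for general $K$, as opposed to the two--dimensional spectrum treated in~\cite{ADR}, is what makes this step delicate.
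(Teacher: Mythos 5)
Your proposal is correct and follows essentially the same route as the paper: extend $\widehat f|_{\Sigma_0}$ via the Schwarz--Mather theorem, build the Schwartz jet in $\lambda$ from Geller's Taylor development, extend it by the Whitney-type Proposition~\ref{whitney}, realize that extension's restriction as a Gelfand transform via functional calculus (Theorem~\ref{main1}), and handle the remaining trivial-jet difference with the interpolation operator of Proposition~\ref{estensioneCk}. Your final function $F=E(\widehat f-\widehat M)+G$ is, up to renaming, exactly the paper's $F=E(\widehat f-\widehat h)+H$, including the per-$p$ dependence of the extension.
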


Notice that this statement implies the existence of a continuous map from $\cS(\Sigma_K^V)$ to $\cS_K(H_n)$ that inverts the Gelfand transform, even though its formulation is much weaker than that of Theorem \ref{Schwarz-Mather}. We do not claim that for each $f$ a single $F$ can be found, all of whose Schwartz norms are controlled by those of $f$. In addition, our proof does not show if $F_p$ can be chosen to be linearly dependent on $f$.

The proof of Theorem \ref{main2} 
 is modelled on that given in \cite{ADR} for the cases $K=\Un ,\mathbb T^n$, but with some relevant differences. On one hand we present a simplification of the argument given there, disregarding the partial results concerning extensions of $\widehat f$ with finite orders of regularity; on the other hand extra arguments are required in the general setting.

We need to show that the Gelfand transform $\widehat f$ of $f\in\cS_K(H_n)$ extends from $\Sigma_K^V$ to a Schwartz function on $\R^{d+1}$.
Our starting point is the construction of a Schwartz extension to all of $\{0\}\times\R^d$ of the restriction of $\widehat f$ to 
$$
\Sigma_0=\{\widehat V(\eta_{Kw}):w\in\C^n\}=\{0\}\times\rho(\C^n)\ .
$$

If $\cF f$ denotes the Fourier transform
 in $\C^n\times\R$,
$$
\cF f(\l,w)=\int_{\C^n\times\R}f(t,z)e^{-i(\l t+\RE z\cdot\bar w)}\,dw\,dt\ ,
$$
we denote
\begin{equation*}
\tilde f(w)=\cF f(0,-w)=\hat f\big(0,\rho(w)\big)\ .
\end{equation*}
To begin with, we set
\begin{equation}\label{sharp}
{\widehat{f}\,}^\sharp=\mathcal E'\tilde f\in\cS(\R^d)\ .
\end{equation}

Then 
${\widehat{f}\,}^\sharp(\xi)=\widehat f(0,\xi)$ if $(0,\xi)\in\Sigma_0$.

\medskip

The next step consists in producing a Taylor development of $\widehat f$ at $\l=0$. The following result is derived from~\cite{G}. In our setting the formula must take into account the extended functions in (\ref{sharp}).

\begin{proposition}\label{geller}
Let $f$ be in $\cS_K(\Hn)$. Then
there exist functions $f_j$, $j\geq 1$, in $\cS_K(\Hn)$, depending linearly and
continuously on $f$, such that
for any $p$ in $\N$,
$$
\widehat{f}(\l,\xi)
=\sum_{j=0}^p
\frac{\l^j}{j!}{\widehat{f_j}\,}^\sharp
(\xi)
+
\frac{\l^{p+1}}{(p+1)!}\,{\widehat{f_{p+1}}}
(\l,\xi),
\qquad
\forall(\l,\xi)\in \Sigma_K,
$$
where $f_0=f$ and ${\widehat{f_j}\,}^\sharp$ is obtained from $f_j$ applying {\rm (\ref{sharp})}.
\end{proposition}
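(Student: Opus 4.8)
The plan is to reduce the whole expansion to a single ``division by $\l$'' performed repeatedly, with the factorials produced automatically by the bookkeeping. I set $f_0=f$ and, assuming $f_j\in\cS_K(\Hn)$ already built, I form $\tilde{f_j}(w)=\cF f_j(0,-w)$ and ${\widehat{f_j}\,}^\sharp=\mathcal E'\tilde{f_j}\in\cS(\R^d)$ as in (\ref{sharp}), and then define $f_{j+1}$ by requiring
$$
\widehat{f_{j+1}}(\l,\xi)=\frac{j+1}{\l}\Bigl(\widehat{f_j}(\l,\xi)-{\widehat{f_j}\,}^\sharp(\xi)\Bigr),\qquad(\l,\xi)\in\Sigma_K,\ \l\ne0.
$$
Equivalently $\widehat{f_j}={\widehat{f_j}\,}^\sharp+\tfrac{\l}{j+1}\widehat{f_{j+1}}$ on $\Sigma_K$, and a telescoping substitution yields, for every $p$,
$$
\widehat f=\sum_{j=0}^p\frac{\l^j}{j!}\,{\widehat{f_j}\,}^\sharp+\frac{\l^{p+1}}{(p+1)!}\,\widehat{f_{p+1}}\qquad\text{on }\Sigma_K,
$$
which is exactly the asserted formula, with $f_0=f$. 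Thus the entire proposition reduces to one step: for every $g\in\cS_K(\Hn)$ the function $\l^{-1}\bigl(\widehat g-{\widehat g\,}^\sharp\bigr)$ on $\Sigma_K$ is the Gelfand transform of a function in $\cS_K(\Hn)$ depending linearly and continuously on $g$.

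For this single step the analytic input is Geller's study \cite{G} of the Schwartz class under the Fourier transform. Passing to the partial Fourier transform of $g$ in the central variable and using the scaling $\phi_{\l,\a}(z,t)=\phi_\a(\sqrt\l z,\l t)$, I would write, on $\Sigma'$,
$$
\widehat g(\Gamma_\a(\l))=\int_{\C^n}\Bigl(\int_\R g(t,z)\,e^{i\l t}\,dt\Bigr)\,q_\a(\sqrt\l z,\sqrt\l\bar z)\,e^{-\l|z|^2/4}\,dz .
$$
After the $K$-invariant reduction to these scalars, Geller's theorem guarantees that the dependence on $\l$ is smooth down to $\l=0$, that the value at $\l=0$ on $\Sigma_0$ is precisely $\tilde g$ (hence the restriction of ${\widehat g\,}^\sharp$ to $\Sigma_0$), and, quantitatively, that the resulting quotient by $\l$ is again the Gelfand transform of a Schwartz function with Schwartz norms controlled by those of $g$. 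The continuous and linear dependence on $g$ then follows from Geller's estimates combined with the continuity of $\mathcal E'$ furnished by Theorem~\ref{Schwarz-Mather}; iterating these bounds finitely many times gives the remainder estimate for $f_{p+1}$ and the continuous dependence of each $f_j$ on $f$.

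The delicate point, which I expect to be the main obstacle, is the identification of the Taylor coefficients with the specific extended functions ${\widehat{f_j}\,}^\sharp$. Geller's theorem pins the coefficients down only on the degenerate part $\Sigma_0$, namely as $\tilde{f_j}$ transported by $\rho$; off $\Sigma_0$ the values of ${\widehat{f_j}\,}^\sharp$ at the points $\Gamma_\a(\l)$ depend on the arbitrary Schwarz--Mather extension $\mathcal E'$, so the equality defining $f_{j+1}$ cannot be read off coefficient by coefficient. What makes the scheme consistent is the density of $\Sigma'$ in $\Sigma_K$ together with the homogeneity (\ref{scaling}): as $\l\to0$ and $|\a|\to\infty$ with $|\l|^{m_k}\widehat{V_k}(\phi_\a)\to\xi_k$, the points $\Gamma_\a(\l)$ accumulate exactly on $\Sigma_0$, so the leading coefficient is forced to coincide with $\tilde g$ there, while the discrepancy introduced off $\Sigma_0$ by the choice of $\mathcal E'$ vanishes to first order in $\l$ and is simply absorbed into the next remainder $\widehat{f_{j+1}}$. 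This reconciliation of the two descriptions of the spectrum---the $\l$-Taylor expansion at $\l=0$ versus the free variable $\xi$ ranging over $\Sigma_K$---is precisely the step that required care in \cite{ADR}, and it is the part of the argument I would expect to demand the most work in the present general setting.
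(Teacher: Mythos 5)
Your recursion and telescoping are exactly the paper's (the paper writes $f_j=jUf_{j-1}$, where $U$ is your division-by-$\l$ operator), so you have correctly reduced the proposition to the single step you isolate: for $g\in\cS_K(\Hn)$, the function $\l^{-1}\bigl(\widehat g-{\widehat g\,}^\sharp\bigr)$ on $\Sigma_K^V$ must be shown to be the Gelfand transform of a function in $\cS_K(\Hn)$ depending linearly and continuously on $g$. It is in this single step that your argument has a genuine gap. You invoke Geller's theorem for smoothness in $\l$ and then propose that the discrepancy created off $\Sigma_0$ by the arbitrary Schwarz--Mather extension $\mathcal E'$ ``vanishes to first order in $\l$ and is simply absorbed into the next remainder $\widehat{f_{j+1}}$''. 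This is circular: absorbing that discrepancy into $\widehat{f_{j+1}}$ presupposes that the discrepancy, divided by $\l$, is again the Gelfand transform of a Schwartz function --- which is precisely the claim being proved. Geller's result identifies Taylor coefficients of $\widehat g$ at $\l=0$ as functions on $\Sigma_0$ only; it says nothing about the particular extension ${\widehat g\,}^\sharp$ at the points $\Gamma_\a(\l)$ with $\l\ne 0$, and it does not produce the group-side function whose transform is the quotient. You flag this as ``the part that would demand the most work'', but you do not supply that work.

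The paper closes the gap by a mechanism absent from your proposal: the functional calculus of Section~\ref{FunctionalCalculus}. First, the function $(\l,\xi)\mapsto{\widehat g\,}^\sharp(\xi)$ is multiplied by an explicit cutoff $\Psi$, equal to $1$ on a neighborhood of $\Sigma_K^V$ and homogeneous of degree $0$ with respect to the dilations outside a compact set, yielding a genuine Schwartz function on $\R^{d+1}$ that agrees with ${\widehat g\,}^\sharp$ on the spectrum. Then Corollary~\ref{corveneruso} (i.e.\ Theorem~\ref{main1}) produces $M\in\cS_K(\Hn)$ with $\widehat M={\widehat g\,}^\sharp$ on $\Sigma_K^V$, so that $h=g-M\in\cS_K(\Hn)$ satisfies $\widehat h=\widehat g-{\widehat g\,}^\sharp$ there. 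Finally, the division by $\l$ is performed on the group side, not the spectral side: $\widehat h$ vanishing on $\Sigma_0$ means $\int_{-\infty}^{+\infty} h(z,t)\,dt=0$ for every $z$, hence $f_1(z,t)=\int_{-\infty}^{t}h(z,s)\,ds$ is again a $K$--invariant Schwartz function and $\widehat h(\l,\xi)=\l\,\widehat{f_1}(\l,\xi)$ on $\Sigma_K^V$; linearity and continuity of $g\mapsto f_1$ are then immediate. So the essential inputs are the cutoff construction, the functional-calculus theorem, and the primitive trick, none of which appear in your argument; Geller's theorem serves the paper as motivation and background for this proposition, not as its proof.
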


\begin{proof}
For $f$ in $\cS_K(\Hn)$, we claim that the restriction of
$u(\l,\xi)={\widehat{f}\,}^\sharp (\xi)$ to $\Sigma_K^V$ is in $\cS(\Sigma_K^V)$. It is quite obvious that $u$ is smooth. Let $\psi$ be a smooth function on the line, equal to 1 on $[-2,2]$ and supported on $[-3,3]$. Define
\begin{equation*}
\begin{aligned}
\Psi(\l,\xi)&=\psi(\l^2+\xi_1^{2/m_1}+\cdots+\xi_d^{2/m_d})\\
&\qquad +\psi\bigg(\frac{\l^2}{\xi_1^{2/m_1}+\cdots+\xi_d^{2/m_d}}\bigg)\big(1-\psi(\l^2+\xi_1^{2/m_1}+\cdots+\xi_d^{2/m_d})\big)\ .
\end{aligned}
\end{equation*}

By (\ref{scaling}) and (\ref{due}) in Lemma \ref{autinteri}, $\Psi$ is equal to 1 on a neighborhood of $\Sigma_K^V$. It is also homogeneous of degree 0 with respect to the dilations (\ref{dilations}) outside of a compact set. Then $\Psi u$ is in $\cS(\R^{d+1})$ and coincides with $u$ on $\Sigma_K^V$.

It follows from Corollary ~\ref{corveneruso}  that there exists $h$
in $\cS_K(\Hn)$ such that
$$
\widehat{f}(\l,\xi)
-{\widehat{f}\,}^\sharp (\xi)
=\widehat{h}(\l,\xi)
\qquad \forall (\l,\xi)\in \Sigma^V_K.
$$

Since $\widehat h\big(0,\rho(w)\big)=0$ for every $w$,
$\int_{-\infty}^{+\infty}h(z,t)\,dt=0$ for every $z$.
Therefore
$$
f_1(z,t)=\int_{-\infty}^{t}h(z,s)\,ds
$$
is in $\cS_K(\Hn)$ and
$$
\widehat h(\l,\xi)=\l\,\widehat f_1(\l,\xi)
\qquad\forall (\l,\xi)\in\Sigma^V_K.
$$

It easy to verify that the map $U:f\mapsto  f_1$ 
is linear and continuous on $\cS_K(\Hn)$.
We then define $f_j$, $j\geq 1$, by the recursion
formula $f_j=jUf_{j-1}$
and the thesis follows by induction.
\end{proof}

We use now the Whitney Extension Theorem \cite{Malgrange} to extend the $C^\infty$-jet $\{\partial_\xi^\a{\widehat{f_j}\,}^\sharp\}_{(j,\alpha)\in\N^{d+1}}$ to a Schwartz function on $\R^{d+1}$. In doing so, we must keep accurate control of the Schwartz norms. For this purpose we use Lemma 4.1 in \cite{ADR}, which reads as follows.

\begin{lemma}\label{lemma4.1}
Let $k\ge1$ and let $h(\l,\xi)$ be a $C^k$-function on $\R^m\times\R^n$ such that
\begin{enumerate}
\item $\partial_\l^\a h(0,\xi)=0$ for $|\a|\le k$ and $\xi\in\R^n$;
\item for every $p\in\N$,
$$
\a_p(h)=\sup_{|\a|+|\beta|\le k}\big\|(1+|\cdot|)^p\partial_\l^\a\partial_\xi^\beta h\big\|_\infty<\infty\ .
$$
\end{enumerate}

Then, for every $\eps>0$ and $M\in\N$, there exists a function $h_{\eps,M}\in\cS(\R^m\times\R^n)$ such that
\begin{enumerate}
\item $\partial_\l^\a h_{\eps,M}(0,\xi)=0$ for every $\a\in\N^m$ and $\xi\in\R^n$;
\item $\sup_{|\a|+|\beta|\le k-1}\big\|(1+|\cdot|)^M\partial_\l^\a\partial_\xi^\beta (h-h_{\eps,M})\big\|_\infty<\eps$;
\item for every $p\in\N$ there is a constant $C_{k,p,M}$ such that
$$
\|h_{\eps,M}\|_{(p,\R^{m+n})} \le C_{k,p,M}\big(1+\a_M(h)^p\eps^{-p}\big)\big\|(1+|\cdot|)^ph\big\|_\infty\ .
$$
\end{enumerate}
\end{lemma}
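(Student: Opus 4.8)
The plan is to regularize $h$ into a Schwartz function by combining two operations: a smooth cut-off in the $\l$-variable, which annihilates $h$ near $\set{\l=0}$ and thereby enforces the infinite flatness required by (i), and a mollification, which produces $C^\infty$-smoothness. The order in which these are performed turns out to be essential. Fix a smooth radial $\Theta$ on $\R^m$ with $\Theta=0$ for $|\l|\le 1$, $\Theta=1$ for $|\l|\ge 2$ and $0\le\Theta\le 1$, and a standard mollifier $\eta\ge 0$, $\int\eta=1$, supported in the unit ball of $\R^m\times\R^n$, writing $\eta_\sigma(y)=\sigma^{-(m+n)}\eta(y/\sigma)$. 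I would set
\[
h_{\eps,M}=\bigl(\Theta(\l/\delta)\,h\bigr)\ast\eta_\sigma ,
\]
with $\delta$ and $\sigma$ chosen of size $\sim\eps/\a_M(h)$ and $\sigma<\delta$ (e.g.\ $\sigma=\delta/2$). Note first that hypothesis (2) with $\a=\b=0$ already forces $h$ itself, and all its derivatives up to order $k$, to be rapidly decreasing. Since $\Theta(\l/\delta)h$ vanishes for $|\l|\le\delta$, its mollification at scale $\sigma<\delta$ vanishes for $|\l|\le\delta-\sigma$; hence $h_{\eps,M}$ is identically $0$ near $\set{\l=0}$ and (i) holds automatically.

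The key quantitative input for (ii) is that (1), differentiated freely in $\xi$ (legitimate since $h\in C^k$), gives $\partial_\l^{\a'}\partial_\xi^\b h(0,\xi)=0$ whenever $|\a'|+|\b|\le k$; thus for $|\a|+|\b|\le k-1$ the derivative $\partial_\l^\a\partial_\xi^\b h$ vanishes in $\l$ to order $r=k-|\a|-|\b|\ge 1$ at $\l=0$, and Taylor's formula yields the weighted bound $(1+|\xi|)^M|\partial_\l^\a\partial_\xi^\b h(\l,\xi)|\lesssim |\l|^{r}\,\a_M(h)$ on $\set{|\l|\le 1}$. I would then split $h-h_{\eps,M}=(h-h\ast\eta_\sigma)+\bigl((1-\Theta(\l/\delta))h\bigr)\ast\eta_\sigma$. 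The first term is the mollification error: for $|\a|+|\b|\le k-1$ one has $\partial_\l^\a\partial_\xi^\b(h-h\ast\eta_\sigma)=g-g\ast\eta_\sigma$ with $g=\partial_\l^\a\partial_\xi^\b h\in C^1$, and the mean-value estimate together with $\sigma\le 1$ gives a weighted $C^{k-1}$ bound $\lesssim\sigma\,\a_M(h)$. The second term is supported in $\set{|\l|\le 2\delta}$; here each factor $\delta^{-|\a_1|}$ produced by differentiating the cut-off is exactly compensated by the order-$r$ vanishing of $h$ near $\l=0$, so that $(1-\Theta(\l/\delta))h$ has weighted $C^{k-1}$ norm $\lesssim\delta\,\a_M(h)$, and convolution with the probability density $\eta_\sigma$ does not increase this (up to the harmless constant $2^M$). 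Choosing $\delta,\sigma\sim\eps/\a_M(h)$ makes the total $<\eps$, proving (ii).

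For (iii) I would place all derivatives on the mollifier: $\partial^\gamma h_{\eps,M}=(\Theta(\l/\delta)h)\ast(\partial^\gamma\eta_\sigma)$, valid for every $\gamma$ because only $\eta_\sigma$ is differentiated (this also exhibits $h_{\eps,M}$ as $C^\infty$). Bounding $|\Theta|\le 1$ and using $\|\partial^\gamma\eta_\sigma\|_{L^1}=\sigma^{-|\gamma|}\|\partial^\gamma\eta\|_{L^1}$ together with the comparison $(1+|x|)^p\lesssim_p(1+|x-y|)^p$ on the $\sigma$-ball, one obtains for $|\gamma|\le p$ the estimate $\|h_{\eps,M}\|_{(p,\R^{m+n})}\lesssim_p\sigma^{-p}\,\|(1+|\cdot|)^p h\|_\infty$, involving only the order-zero weighted norm of $h$ and, crucially, no negative power of $\delta$, since the cut-off is never differentiated. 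With $\sigma\sim\eps/\a_M(h)$ this gives $\|h_{\eps,M}\|_{(p,\R^{m+n})}\le C_{k,p,M}\bigl(1+\a_M(h)^p\eps^{-p}\bigr)\|(1+|\cdot|)^p h\|_\infty$, which is (iii); in particular all Schwartz norms are finite, so $h_{\eps,M}\in\cS(\R^m\times\R^n)$.

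The only real subtlety, and the step I expect to be the main obstacle, is arranging the two regularizations so that a single choice $\delta\sim\sigma\sim\eps/\a_M(h)$ controls simultaneously both error terms in (ii) and the Schwartz norms in (iii). Cutting off \emph{before} mollifying is what makes this work: it lets the cut-off's $\delta^{-|\a_1|}$ factors be absorbed by the vanishing of $h$ in (ii), and it keeps the cut-off undifferentiated in (iii) so that the bound there stays at the sharp power $\sigma^{-p}$, hence $\eps^{-p}$. The remaining work is the routine bookkeeping of weights under convolution and the uniform control of Taylor remainders, all of which rely only on $h\in C^k$ and finiteness of the $\a_p(h)$.
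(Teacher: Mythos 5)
Your construction is correct, but there is nothing in this paper to compare it against: the paper does not prove Lemma~\ref{lemma4.1} at all, it quotes it verbatim from \cite{ADR} (``we use Lemma 4.1 in \cite{ADR}, which reads as follows''), so the in-paper ``proof'' is a citation. Your cutoff-then-mollify argument, $h_{\eps,M}=\bigl(\Theta(\l/\delta)h\bigr)\ast\eta_\sigma$ with $\sigma=\delta/2$ and $\delta\sim\eps/\a_M(h)$, is a sound self-contained substitute, and the three pillars all check out. For (i): $\Theta(\l/\delta)h$ vanishes on $\set{|\l|\le\delta}$, so the convolution vanishes on $\set{|\l|\le\delta-\sigma}$, a full neighborhood of $\set{\l=0}$. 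For (ii): the flatness hypothesis does propagate to mixed derivatives, $\partial_\l^{\a'}\partial_\xi^\b h(0,\cdot)=0$ for $|\a'|+|\b|\le k$ (legitimate since mixed partials of total order $\le k$ of a $C^k$ function commute), and in the Leibniz expansion of $\partial_\l^\a\partial_\xi^\b\bigl((1-\Theta(\l/\delta))h\bigr)$ the factor $\delta^{-|\a_1|}$ meets $\partial_\l^{\a_2}\partial_\xi^\b h$, which vanishes in $\l$ to order $r=k-|\a_2|-|\b|\ge|\a_1|+1$; Taylor's formula with the weighted bound $\a_M(h)$ on the $k$-th derivatives gives $\delta^{-|\a_1|}(2\delta)^r\lesssim\delta$, while the mean-value estimate bounds the mollification error $g-g\ast\eta_\sigma$ by $2^M\sigma\,\a_M(h)$, so the total error is $\lesssim_{k,M}(\delta+\sigma)\a_M(h)<\eps$. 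For (iii): with all derivatives on $\eta_\sigma$ and $|\Theta|\le1$ you indeed get $\lesssim_p\sigma^{-p}\|(1+|\cdot|)^ph\|_\infty$, involving no derivatives of $h$ --- which is essential, since $h$ has only $k$ of them. Two small touch-ups you should make explicit: take $\sigma=\min\bigl(1,\,c_{k,M}\,\eps/\a_M(h)\bigr)$, both to justify the weight comparisons $(1+|x|)^p\le2^p(1+|x-y|)^p$ used throughout (they need $|y|\le\sigma\le1$) and to cover the regime where $\eps/\a_M(h)$ is large, in which case $\sigma^{-p}\le c_{k,M}^{-p}\bigl(1+\a_M(h)^p\eps^{-p}\bigr)$ still yields the stated form of (iii); and dispose of the degenerate case $\a_M(h)=0$ (then $h\equiv0$ and $h_{\eps,M}=0$ works). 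With these, the argument is complete; membership of $h_{\eps,M}$ in $\cS(\R^{m+n})$ follows from the same bound as (iii).
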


The following proposition is in \cite[Proposition 4.2]{ADR} for $K=\mathbb T^n$. We give here a simplified proof.

\begin{proposition}\label{whitney}
Given $f\in\cS_K(H_n)$ and $p\in\N$, there are $H\in \cS(\R^{d+1})$ and $q\in\N$ such that $\partial_\l^jH(0,\xi) ={\widehat{f_j}\,}^\sharp(\xi)$ and
$$
\|H\|_{(p,\R^{d+1})}\le C_p\|f\|_{(q,H_n)}\ .
$$
\end{proposition}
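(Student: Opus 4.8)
The plan is to construct $H$ by applying the Whitney-type extension machinery of Lemma~\ref{lemma4.1} iteratively across the Taylor orders produced by Proposition~\ref{geller}, summing the pieces with careful control of Schwartz norms so that the total is controlled by a single Schwartz norm of $f$. The functions ${\widehat{f_j}\,}^\sharp\in\cS(\R^d)$ from Proposition~\ref{geller} specify the desired $\l$-derivatives of $H$ at $\l=0$, namely $\partial_\l^jH(0,\xi)={\widehat{f_j}\,}^\sharp(\xi)$, so the task is to realize this prescribed jet along $\{0\}\times\R^d$ by a genuine Schwartz function on $\R^{d+1}$, with the norm estimate $\|H\|_{(p,\R^{d+1})}\le C_p\|f\|_{(q,H_n)}$.

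First I would build a formal candidate by the standard Borel-type construction: set $H_0(\l,\xi)=\sum_{j}\frac{\l^j}{j!}{\widehat{f_j}\,}^\sharp(\xi)\,\chi(\l/\eps_j)$, where $\chi$ is a fixed cutoff supported near the origin and the $\eps_j$ are chosen to shrink fast enough that the series converges in every $\cS(\R^{d+1})$-norm. The continuity of the map $f\mapsto f_j$ asserted in Proposition~\ref{geller} gives bounds $\|{\widehat{f_j}\,}^\sharp\|_{(p,\R^d)}\le C_{j,p}\|f\|_{(q_{j,p},H_n)}$; the subtlety is that the constants and the required index $q_{j,p}$ grow with $j$, so the $\eps_j$ must be selected to absorb this growth. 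This yields a Schwartz function whose $\l$-derivatives at $\l=0$ agree with the prescribed jet, which already gives the jet-matching conclusion $\partial_\l^jH(0,\xi)={\widehat{f_j}\,}^\sharp(\xi)$.

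The key difficulty, and the reason Lemma~\ref{lemma4.1} is invoked rather than a bare Borel construction, is the quantitative norm estimate with a \emph{fixed} target norm $p$ on the left and a \emph{single} Schwartz norm $\|f\|_{(q,H_n)}$ on the right. To get this I would fix $p$, truncate the Taylor expansion of Proposition~\ref{geller} at order $k=k(p)$ large enough that the remainder $\frac{\l^{k+1}}{(k+1)!}\widehat{f_{k+1}}(\l,\xi)$ and its derivatives up to order $p$ are negligibly small near $\Sigma_0$, and then apply Lemma~\ref{lemma4.1} to the difference $h=\widehat{f}-\sum_{j\le k}\frac{\l^j}{j!}{\widehat{f_j}\,}^\sharp$, which vanishes to high order in $\l$ at $\l=0$ and satisfies the hypotheses of the lemma with $\a_p(h)$ controlled by $\|f\|_{(q,H_n)}$ through Corollary~\ref{corveneruso} and Proposition~\ref{geller}. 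The lemma then produces an honest Schwartz correction $h_{\eps,M}$ with $\|h_{\eps,M}\|_{(p,\R^{d+1})}$ bounded in terms of $\a_M(h)$ and $\eps$; choosing $\eps$ and $M$ appropriately gives the desired estimate.

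I would then set $H$ equal to the truncated Taylor polynomial (each term cut off in $\l$ near the origin as above) plus the Whitney correction, verify that its $\l$-derivatives at $\l=0$ reproduce ${\widehat{f_j}\,}^\sharp$ for all $j$ (the polynomial part supplies these up to order $k$, and the correction vanishes to infinite order in $\l$ at $\l=0$ by item~(1) of the lemma's conclusion), and read off the norm bound by summing the two contributions. The main obstacle I anticipate is exactly the bookkeeping that forces $q$ to depend on $p$: since each Taylor coefficient costs a higher Schwartz norm of $f$, and since matching $p$ derivatives requires truncating at an order $k(p)$ that itself grows, one cannot hope for a uniform $q$, which is why the theorem is phrased with $q$ depending on $p$ and why linear dependence of $H$ on $f$ is not claimed. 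Keeping the dependence of all constants on $p$ (and not on the individual $f$) transparent through this truncation-plus-correction scheme is the delicate part.
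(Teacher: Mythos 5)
Your plan breaks down at the step where you invoke Lemma~\ref{lemma4.1}, and the breakdown is structural, not just technical. You propose to apply the lemma to $h=\widehat{f}-\sum_{j\le k}\frac{\l^j}{j!}{\widehat{f_j}\,}^\sharp$. But $\widehat f$ is not a function on $\R^{d+1}$: it is defined only on the Gelfand spectrum $\Sigma_K^V$, which is the union of $\Sigma_0$ and countably many curves $\Gamma_\alpha$. Lemma~\ref{lemma4.1} requires a $C^k$ function on all of $\R^{m}\times\R^{n}$ with globally bounded weighted derivatives; hypothesis (2) involves $\partial_\xi^\beta h$, which has no meaning transverse to the curves $\Gamma_\alpha$, and neither Corollary~\ref{corveneruso} nor Proposition~\ref{geller} supplies such bounds. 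Producing \emph{any} smooth extension of $\widehat f$ off the spectrum is precisely the content of Theorem~\ref{main2}, which is not available at this stage of the argument; inside Proposition~\ref{whitney} the only admissible data are the jet functions ${\widehat{f_j}\,}^\sharp$, which are genuine Schwartz functions on $\R^d$ by Theorem~\ref{Schwarz-Mather}. The part of $\widehat f$ that the jet does not capture is handled \emph{afterwards}, on the spectrum itself, by the interpolation operator $E$ of Proposition~\ref{estensioneCk} --- not inside this proposition. Moreover, even granting some extension, your final $H$ (cutoff Taylor polynomial up to order $k(p)$ plus a correction with identically vanishing jet) satisfies $\partial_\l^jH(0,\xi)=0$ for $j>k(p)$, not ${\widehat{f_j}\,}^\sharp(\xi)$; so it fails the conclusion of the proposition, which requires the full infinite jet. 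This is not a cosmetic point: in the proof of Theorem~\ref{main2} one needs $\widehat f-\widehat h$ to have \emph{trivial} jet at all orders on $\Sigma_0$ in order to apply Proposition~\ref{estensioneCk}, so matching only finitely many derivatives is useless downstream.

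The missing idea, which is how the paper proceeds, is to apply Lemma~\ref{lemma4.1} not to a remainder involving $\widehat f$ but to each cutoff Taylor term separately: $h_k(\l,\xi)=\eta(\l)\,\frac{\l^{k+1}}{(k+1)!}\,{\widehat{f_{k+1}}\,}^\sharp(\xi)$, which \emph{is} a Schwartz function on $\R^{d+1}$, with the choices $\eps_k=2^{-k}\|f\|_{(q,\Hn)}$ and $M=p$. The lemma returns approximants $H_k$ with identically vanishing jet at $\l=0$ such that $h_k-H_k$ is bounded by $2^{-k}\|f\|_{(q,\Hn)}$ in all weighted derivatives up to order $k-1$; one then sets
\begin{equation*}
H=\sum_{k=0}^{p}h_k-\sum_{k=0}^{p}H_k+\sum_{k=p+1}^{\infty}(h_k-H_k)\ .
\end{equation*}
Each tail term $h_k-H_k$ ($k\ge p+1$) still carries the order-$(k+1)$ jet datum ${\widehat{f_{k+1}}\,}^\sharp$, because subtracting $H_k$ does not alter the jet, yet it is geometrically small in the fixed $p$-norm; so the series converges with $\|H\|_{(p,\R^{d+1})}\le C_p\|f\|_{(q,\Hn)}$ while term-by-term differentiation at $\l=0$ recovers the \emph{entire} jet. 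In other words, the smallness needed to sum infinitely many jet terms under a single norm is obtained from the approximation property of Lemma~\ref{lemma4.1}, not from shrinking cutoff scales (your Borel sum, whose scales $\eps_j$ would have to depend on unboundedly many Schwartz norms of $f$) and not from truncation (which, as above, destroys the jet).
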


\begin{proof}
Let $\eta$ be a smooth function on $\R$ such that
$\eta(t)=1$ if $|t|\leq 1$ and $\eta(t)=0$ if $|t|\geq 2$. By Theorem \ref{Schwarz-Mather} and  Proposition \ref{geller}, for every $k$ and $r$ there exists $q_{k,r}$ such that
\begin{equation}\label{stimasharp}
\|\widehat{f_k}^\sharp\|_{(r,\R^{d})}\le C_{k,r}\|f\|_{(q_{k,r},H_n)}\ .
\end{equation}

We fix $p\in\N$ and apply Lemma \ref{lemma4.1} to 
$$
h_k(\l,\xi)=\eta(\l)\frac{\l^{k+1}}{(k+1)!}\widehat{f_{k+1}}^\sharp(\xi)\ .
$$ 

Hypothesis (1) is obviously satisfied and (2) also, because $h_k$ is a Schwartz function. By (\ref{stimasharp}),
\begin{equation}\label{stimaalphak}
\a_r(h_k)\le C_{k,r}\|f\|_{(q_{k+1,r},H_n)}\ .
\end{equation}

Let  $q$ be the maximum among  the $q_{k,p}$ for $k\le p+1$.  Setting  $\eps_k=2^{-k}\|f\|_{(q,H_n)}$, $M=p$, for each $k$ there is a function $H_k\in\cS(\R^{d+1})$ such that
\begin{enumerate}
\item[(i)]  $\partial_\l^j H_k(0,\xi)=0$ for all $j\in\N$ and $\xi\in\R^d$;
\item[(ii)] $\sup_{|\a|+|\beta|\le k-1}\big\|(1+|\cdot|)^p\partial_\l^\a\partial_\xi^\beta (h_k-H_k)\big\|_\infty<2^{-k}\|f\|_{(q,H_n)}$;
\item[(iii)] for $k\le p$, using (\ref{stimaalphak}),
$$
\|H_k\|_{(p,\R^{d+1})}\le  C_{k,p}\big(1+\eps_k^{-p}\|f\|_{(q,H_n)}^p\big)\big\|(1+|\cdot|)^ph_k\big\|_\infty
\le C_p\|f\|_{(q,H_n)}\ .
$$
\end{enumerate}

Define 
$$
H=\sum_{k=0}^ph_k-\sum_{k=0}^pH_k+\sum_{k=p+1}^\infty(h_k-H_k)\ .
$$

By (\ref{stimasharp}), (ii) and (iii), the $p$-th Schwartz norm of $H$ is finite and controlled by a constant times the $q$-th Schwartz norm of $f$. Differentiating term by term, using (i) and the identity $\partial_\l^jh_k(0,\xi)=\delta_{j,k+1}\widehat{f_{k+1}}^\sharp(\xi)$, we obtain that $\partial_\l^jH(0,\xi) =\widehat{f_j}^\sharp(\xi)$ for every $j$.
\end{proof}

\bigskip

Let now $\vp$ be a smooth function on $\R$ such that
$\vp(t)=1$ if $|t|\leq 1/2$ and $\vp(t)=0$ if $|t|\geq 3/4$.
For $h$ defined on $\Sigma_K^V$,  we define the  function $Eh$ on  $\R^{d+1}$ by

\begin{equation*}
Eh(\l,\xi)=
\begin{cases}
\displaystyle
\sum_{\a\in\Lambda}
h\big(\l,\xi_{(\l,\a)}\big)
\prod_{\ell=1}^{d}\vp
\left( \frac{\xi_{\ell}}{|\l|^{m_\ell}}
-\widehat{V_\ell}(\phi_\a)\right)
&\l\neq 0,\,\,  \xi\in \R^d
\\
\quad 0
&\l=0,\,\, \xi\in \R^d,
\end{cases}
\end{equation*}
where
$
\xi_{(\l,\a)} =\big(
\widehat{V_1}(\phi_{\l,\alpha}),
\ldots,\widehat{V_d}(\phi_{\l,\alpha})
\big)
=\big(
|\l|^{m_1}\widehat{V_1}(\phi_{\alpha}),
\ldots,|\l|^{m_d}\widehat{V_d}(\phi_{\alpha})
\big)
$.

Recall that, by Lemma~\ref{autinteri}, each $\widehat{V_\ell}(\phi_\a)$
is a  positive integer. Therefore, if $\xi_\ell\le0$ for some $\ell$,
every term in the series vanishes, whereas, if
 $\xi$ is in $\R^d_+$,  the series reduces to
at most one single term.
Moreover,  for every $g$ in $\cS(\Hn)$,
 $E\widehat g=\widehat g$ on $\Sigma'$.

The proof of the following result goes as for~\cite[Lemma 3.1]{ADR}, using \cite[p. 407]{BJR} instead of~{\cite[(2.2)]{ADR}}. In contrast with~\cite{ADR} we state it only for vanishing of infinite order of the Taylor development of the Gelfand transform on $\Sigma_0$.

\begin{proposition}\label{estensioneCk}
Suppose that
$g$  in $\cS_K(\Hn)$ and
$
\widehat{g_j}_{|_{\Sigma_0}}
=0$ for every $j$.
Then 
\begin{enumerate}
\item 
$\displaystyle
E \widehat {g}(\l,\xi)=\widehat g(\l,\xi)$ for all $(\l,\xi)\in\Sigma^V_K$;
\item 
 $ \partial_\l^s (E\widehat g)(0,\xi)=
0$ for all $s$ and $\xi\in \R^d$;
\item  
for every $p\geq 0$ there exist a constant $C_p$ and an integer $q\geq 0$
such that
$$
\|E\widehat g\|_{(p,\R^{d+1})}
\leq C_{p}\|g\|_{(q,\Hn)}\ .
$$
\end{enumerate}

In particular, $E \widehat{g}\in\cS(\R^{d+1})$.
\end{proposition}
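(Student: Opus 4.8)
The plan is to combine the interpolation identity $E\widehat g=\widehat g$ on $\Sigma'$, recorded just before the statement, with the infinite order of flatness of $\widehat g$ at $\Sigma_0$ that the hypothesis forces. Part (1) is then immediate: on $\Sigma'$ the identity is already known, while on $\Sigma_0$ we have $E\widehat g(0,\xi)=0$ by the definition of $E$ and $\widehat g(0,\xi)=\widehat{g_0}(0,\xi)=0$ by the hypothesis with $j=0$ (recall $g_0=g$); hence the two functions agree on all of $\Sigma_K^V$.

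Next I would extract the flatness. Since $\widehat{g_j}_{|_{\Sigma_0}}=0$ means $\widetilde{g_j}=0$, formula (\ref{sharp}) gives ${\widehat{g_j}\,}^\sharp=\mathcal E'\widetilde{g_j}=0$ for every $j$, so the polynomial part of the Taylor development of Proposition \ref{geller} drops out and, for every $p$,
$$
\widehat g(\l,\xi)=\frac{\l^{p+1}}{(p+1)!}\,\widehat{g_{p+1}}(\l,\xi),\qquad(\l,\xi)\in\Sigma_K^V.
$$
As $\widehat{g_{p+1}}$ is the Gelfand transform of a Schwartz function, it is bounded on $\Sigma_K^V$ with sup norm dominated by a Schwartz norm of $g$; restricting to $\Sigma'$ and evaluating at $\xi_{(\l,\a)}$ yields the flatness bound $|\widehat g(\l,\xi_{(\l,\a)})|\le C_N|\l|^N$ for every $N$, uniformly in $\a$, with $C_N\lesssim\|g\|_{(q_N,\Hn)}$. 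The same identity, differentiated along each curve $\Gamma_\a$, gives analogous flatness for the $\l$-derivatives of $\l\mapsto\widehat g(\Gamma_\a(\l))$; making these bounds uniform in $\a$ is part of the technical core below.

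For parts (2) and (3) I would study $E\widehat g$ near $\{\l=0\}$. Away from this set $E\widehat g$ is smooth, being locally a single summand. Fixing $\xi$ with all coordinates positive (if some $\xi_\ell<0$ then $E\widehat g$ vanishes in a neighbourhood, and the coordinate hyperplanes are recovered afterwards by continuity), I would apply $\partial_\l^s\partial_\xi^\beta$ by Leibniz to the unique nonvanishing term $\widehat g(\l,\xi_{(\l,\a)})\prod_\ell\vp\!\big(\xi_\ell/|\l|^{m_\ell}-\widehat{V_\ell}(\phi_\a)\big)$. Differentiating the cutoffs produces factors $\xi_\ell/|\l|^{m_\ell+k}$; but on the support of $\vp'$ one has $\widehat{V_\ell}(\phi_\a)\sim\xi_\ell/|\l|^{m_\ell}$, so these are $|\l|^{-k}$ times a bounded power of $\widehat{V_\ell}(\phi_\a)$, and the same is true of the chain-rule factors $\partial_\l\big(|\l|^{m_\ell}\widehat{V_\ell}(\phi_\a)\big)$ coming from the argument $\xi_{(\l,\a)}$. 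Each such negative power of $|\l|$ is absorbed by the flatness bounds of the previous paragraph, so every derivative tends to $0$ as $\l\to0$. This gives smoothness across $\{\l=0\}$ together with $\partial_\l^s(E\widehat g)(0,\xi)=0$, i.e. (2).

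For the Schwartz estimate (3) I would note that on the support of the surviving term $(\l,\xi)$ and $(\l,\xi_{(\l,\a)})$ are comparable, so the rapid decay of $\widehat{g_{p+1}}$ along $\Sigma'$ transfers, via the anisotropic scaling (\ref{dilations}), into rapid decay of $E\widehat g$ in $(\l,\xi)$. The main obstacle is quantitative bookkeeping: differentiating the cutoffs and the argument $\xi_{(\l,\a)}$ generates growing powers of $\widehat{V_\ell}(\phi_\a)$, which must be bounded on the support by $\xi_\ell/|\l|^{m_\ell}$ and then simultaneously defeated by the flatness $|\l|^N$, uniformly in $\a$, so that the single surviving term at each point is dominated and one obtains $\|E\widehat g\|_{(p,\R^{d+1})}\le C_p\|g\|_{(q,\Hn)}$. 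It is precisely here that I would follow \cite[Lemma 3.1]{ADR}, using the estimate of \cite[p.\,407]{BJR} to control $\widehat{V_\ell}(\phi_\a)$ in terms of the spectral variables.
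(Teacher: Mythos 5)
Your proposal is correct and takes essentially the same route as the paper, whose entire proof of this proposition is the remark that it goes as for \cite[Lemma 3.1]{ADR} with the estimate of \cite[p.~407]{BJR} in place of \cite[(2.2)]{ADR}: your steps --- part (1) from the identity $E\widehat g=\widehat g$ on $\Sigma'$ together with the vanishing of both sides on $\Sigma_0$, the reduction to infinite-order flatness by observing that ${\widehat{g_j}\,}^\sharp=\mathcal E'\tilde g_j=0$ so that Proposition~\ref{geller} leaves only $\widehat g(\l,\xi)=\l^{p+1}\widehat{g_{p+1}}(\l,\xi)/(p+1)!$ on $\Sigma_K^V$, and the Leibniz/absorption analysis of the single surviving term of $E\widehat g$ --- constitute exactly that argument, with the uniform-in-$\a$ bookkeeping deferred to the same two references the paper itself cites. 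The only point to phrase more carefully is that the $\l$-derivatives of $\widehat g(\l,\xi_{(\l,\a)})$ must be understood as derivatives of the one-variable function $\l\mapsto\int_{\Hn}g\,\phi_{\l,\a}$ along the curve $\Gamma_\a$ --- which is precisely what \cite[p.~407]{BJR} estimates --- and not as chain-rule derivatives through an ambient smooth extension of $\widehat g$ (no such extension is available a priori, since producing one is the very point of the proposition); as you invoke the same reference at exactly this spot, this is a matter of wording rather than a gap.
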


\bigskip

To conclude the proof of Theorem \ref{main2}, take  $f$ in $\cS_K(\Hn)$
 and $p$ in $\N$.
 Let  $H$ be the function in  $\cS(\R^{d+1})$,
  depending on $p$,  defined as in  
  Proposition~\ref{whitney}.
By~Theorem~\ref{Veneruso}, there exists $h$ in  $\cS(\Hn)$
such that
$$
H_{|_{\Sigma^V_K}}=\widehat h\ .
$$ 

Define
$$
F=E(\widehat f-\widehat  h)+H\ ,
$$
and the thesis follows easily.

\section{General compact groups of automorphisms of $\Hn$}
\label{Kgenerico}

We have discussed in the previous sections the Gelfand pairs 
associated with connected
subgroups of $\Un$.
In this section we only assume that $K$
is a compact group of automorphisms of $\Hn$.
Let $K_0$ be the connected identity component of $K$. 
Then $K_0$ is a normal subgroup of $K$ and $F=K/K_0$ is a finite group.
Conjugating $K$ with an automorphism if necessary,
we may suppose that $K_0$ is a subgroup of $\Un$.

For $D$ in $ {\mathbb D}_{K_0}$ and $w=kK_0$ in $F$, define $D^w$ by
$$
D^w f=D(f\circ k^{-1})\circ k\ \qquad \forall f\in C^\infty(H_n).
$$
Since $K_0$ is normal,  $D^w$ is in ${\mathbb D}_{K_0}$. 
It is also clear that ${\mathbb D}_{K_0}$ admits a generating set 
which is stable under the action of the group $F$. 
Indeed, it suffices to add to any given system 
of generators the $F$-images of its elements. 
Denoting by $\cV$ the linear span of these generators, 
$F$ acts linearly on $\cV$. Let $V=\{V_1,\dots,V_d\}$ be a basis of $\cV$, 
orthonormal with respect to an $F$--invariant scalar product. 
Clearly, $V$ is a generating set for ${\mathbb D}_{K_0}$. 

Applying Hilbert's Basis Theorem as in Section~\ref{Mather}, 
there exists a finite number of (homogeneous) polynomials $\rho_1,\ldots,\rho_r$
generating the subalgebra ${ P}_F(\R^d)$ of $F$--invariant
elements in ${P}(\R^d)$. 
Let $\rho=(\rho_1,\ldots,\rho_r):\R^d\to\R^r$ be the corresponding Hilbert map
and let $W_j=\rho_j(V_1,\ldots,V_d)$ for $j=1,\ldots,r$.

When $f$ is $K_0$--invariant and $w=kK_0$ in $F$, we set $f\circ w=f\circ k$.



\begin{lemma}\label{invar}
The set $W=\set{W_1,
,\ldots,W_r}$  generates
${{\mathbb D}_K}$. Moreover
if $\psi$ is a  $K$-spherical function, then
\begin{equation}\label{formsferiche}
\psi=\frac1{|F|} \sum_{w\in F} \phi\circ w,
\end{equation}
for some $K_0$-spherical function $\phi$.
\end{lemma}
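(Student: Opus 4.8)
The plan is to establish the two assertions of Lemma~\ref{invar} separately, treating the generation statement first and then the averaging formula \eqref{formsferiche}.

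For the generation claim, I would argue as follows. By construction each $W_j=\rho_j(V_1,\dots,V_d)$ lies in ${\mathbb D}_{K_0}$, and since the $\rho_j$ are $F$--invariant polynomials and $F$ acts orthogonally on $\cV=\span\{V_1,\dots,V_d\}$, each $W_j$ is $F$--invariant as an element of ${\mathbb D}_{K_0}$. Because $K$ is generated by $K_0$ together with representatives of $F=K/K_0$, a differential operator in ${\mathbb D}_{K_0}$ is $K$--invariant precisely when it is fixed by the $F$--action $D\mapsto D^w$; hence each $W_j$ belongs to ${\mathbb D}_K$. Conversely, I would show every element of ${\mathbb D}_K$ is a polynomial in the $W_j$. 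The key point is that ${\mathbb D}_K={\mathbb D}_{K_0}^F$, the $F$--fixed subalgebra of ${\mathbb D}_{K_0}$, and that ${\mathbb D}_{K_0}$ is (via symmetrization) a polynomial algebra on the generating set $V$ on which $F$ acts linearly. The subalgebra of $F$--invariants is then generated exactly by the invariant polynomials $\rho_j(V_1,\dots,V_d)$, because any $F$--invariant polynomial in $V_1,\dots,V_d$ lies in $P_F(\R^d)$ evaluated at the $V_j$, and $\rho_1,\dots,\rho_r$ generate $P_F(\R^d)$ by choice. This yields that $W$ generates ${\mathbb D}_K$.

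For the averaging formula, I would start from the characterization of spherical functions as normalized joint eigenfunctions of the invariant differential operators. Given a $K$--spherical function $\psi$, it is in particular a joint eigenfunction of the operators in $\cV\subset{\mathbb D}_{K_0}$ and is $K_0$--invariant; being $K$--spherical it is moreover $K$--invariant. The strategy is to exhibit $\psi$ as the $F$--average of a single $K_0$--spherical function $\phi$. Concretely, the functions $\phi\circ w$ for $w\in F$ are again $K_0$--spherical (since $K_0$ is normal, precomposition by a representative of $w$ permutes the eigendata), and averaging over $F$ produces a $K$--invariant function that is still positive-definite and normalized at the identity. Conversely, restricting $\psi$ to the $K_0$--spherical decomposition and using that $\psi(e)=1$, I would identify an appropriate $\phi$ whose $F$--orbit average recovers $\psi$. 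The cleanest route is to invoke that $K$--spherical functions are obtained from $K_0$--spherical functions by summing over the finite orbit under $F$, normalized by $|F|$, which matches \eqref{formsferiche}.

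The main obstacle I expect is the converse direction of the generation claim, namely verifying that ${\mathbb D}_K$ coincides with the $F$--invariants ${\mathbb D}_{K_0}^F$ and that these invariants are captured by polynomials in the finitely many $\rho_j(V)$. This requires care because the symmetrization map is only a linear isomorphism (not an algebra homomorphism), so one must transfer the polynomial-algebra structure and the $F$--action faithfully between ${\mathbb D}_{K_0}$ and the symmetric algebra, and then apply the Hilbert basis theorem for $F$--invariants in the right setting. The averaging formula, by contrast, should follow more directly once the spherical function theory for the pair $(K_0\ltimes\Hn,K_0)$ and its relation to the pair for $K$ is invoked, since $F$ is finite and acts by permuting the $K_0$--spherical functions.
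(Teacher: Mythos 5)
Both halves of your proposal have genuine gaps, and in each case the missing ingredient is the actual mechanism of the paper's proof. For the generation claim, the forward inclusion ($W_j\in{\mathbb D}_K$, via ${\mathbb D}_K=({\mathbb D}_{K_0})^F$) is fine, but your converse rests on the assertion that ${\mathbb D}_{K_0}$ is a \emph{free} polynomial algebra on $V_1,\dots,V_d$, so that its $F$--fixed subalgebra is $P_F(\R^d)$ evaluated at the $V_j$. That freeness is not established and need not hold: here $V$ is a basis of the linear span of a generating set \emph{enlarged by its $F$--images}, and such operators may satisfy algebraic relations; you flag exactly this point as your ``main obstacle'' but do not resolve it, and without uniqueness of the representation $D=p(V_1,\dots,V_d)$ the $F$--invariance of $D$ tells you nothing about $p$. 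The paper's fix is a one-line averaging that bypasses freeness entirely: the $F$--action is by algebra automorphisms of ${\mathbb D}_{K_0}$ acting linearly on $\cV$, and $D^w=D$ for $D\in{\mathbb D}_K$, so $D=\frac1{|F|}\sum_{w\in F}D^w=\tilde p(V_1,\dots,V_d)$, where $\tilde p$ is the average of $p$ over the linear $F$--action on $\R^d$; this $\tilde p$ is $F$--invariant, hence a polynomial in $\rho_1,\dots,\rho_r$, hence $D$ is a polynomial in $W_1,\dots,W_r$.

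For the averaging formula there are two problems. First, your starting point is false: a $K$--spherical function $\psi$ is a joint eigenfunction of ${\mathbb D}_K$ only, not of the operators in $\cV\subset{\mathbb D}_{K_0}$; indeed \eqref{formsferiche} exhibits $\psi$ as a sum of $K_0$--spherical functions with \emph{different} ${\mathbb D}_{K_0}$--eigenvalues (an element of $K\setminus K_0$ may, for instance, reverse the center of $\Hn$ and flip the sign of the $T$--eigenvalue), so $\psi$ is in general not a ${\mathbb D}_{K_0}$--eigenfunction. Second, the step that actually produces $\phi$ is circular: to ``invoke that $K$--spherical functions are obtained from $K_0$--spherical functions by summing over the finite orbit under $F$'' is to invoke the very statement being proved. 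The paper's argument, absent from your proposal, is an extreme-point argument: since all bounded spherical functions of these pairs are of positive type \cite{BJR90}, the $K$-- (resp.\ $K_0$--) spherical functions are exactly the extreme points of the weak*-compact convex set $\cP_K$ (resp.\ $\cP_{K_0}$) of normalized invariant functions of positive type; the operator $J\varphi=\frac1{|F|}\sum_{w\in F}\varphi\circ w$ is affine, weak*-continuous, and maps $\cP_{K_0}$ onto $\cP_K$ (it fixes $\cP_K$ pointwise); and every extreme point of the image of such a map is the image of an extreme point of the source. This Krein--Milman-type step is the heart of the lemma; positive-definiteness and normalization alone, which is all your proposal uses, only place the average $J\phi$ inside $\cP_K$ and cannot single out the spherical functions.
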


\begin{proof}
Take $D$ in ${{\mathbb D}_K}$.  As an element of ${{\mathbb D}_{K_0}}$,
 $D$ is a polynomial in the $V_j$. Averaging over the action of $F$, 
 we can express $D$ as an $F$--invariant polynomial in the $V_j$. Hence  
 $D$ is a polynomial in $W_1=\rho_1(V_1,\ldots,V_d),\ldots,W_r=\rho_r(V_1,\ldots,V_d)$.

Recall that all $K_0$- and $K$-spherical functions are of positive type~\cite{BJR90}.
Let $\cP_K$ (resp.~$\cP_{K_0}$) denote the convex set of 
 $K$--invariant (resp.~$K_0$--invariant) functions of positive type
 equal to $1$ 
 at the identity element, and consider the linear map 
 $J:L^\infty_{K_0}\rightarrow L^\infty_K$ defined by
 $J\varphi=\frac1{|F|}\sum_{w\in F}\varphi\circ w$.
Since $\cP_{K_0}$ and $\cP_K$ are weak*-compact and $J$ maps $\cP_{K_0}$ to $\cP_K$, the extremal points of $\cP_K$ are images of extremal points of $\cP_{K_0}$. This proves that every $K$-spherical function has the form (\ref{formsferiche}). 

Conversely, if $\psi$ is given by (\ref{formsferiche}) and $D\in{\mathbb D}_K$, then
$$
D\psi=\frac1{|F|}\sum_{w\in F}D(\varphi\circ w)=
\frac1{|F|}\sum_{w\in F}(D\varphi)\circ w=\hat D(\varphi)\, \psi\ ,
$$
showing that $\psi$ is $K$-spherical.
\end{proof}

From Lemma~\ref{invar} we derive
the following property of the Gelfand spectra:
$$
\rho(\Sigma_{K_0}^V)=\Sigma_K^{W}\subset \R^r.
$$
If $V$ is as above, the linear action of $F$ on $\cV$ leaves $\Sigma^V_{K_0}$ invariant. 
For a $K_0$--invariant function $f$ and $w$ in $F$, 
\begin{equation}
\label{numero}
 \cG_V(f\circ w)=(\cG_Vf)\circ w 
\end{equation}
Let $\cS_F(\Sigma_{K_0}^V)$ be 
the space of $F$--invariant elements in $\cS(\Sigma_{K_0}^V)$.

\begin{lemma}\label{schspettri}
The map $f\mapsto f\circ\rho$ is an isomorphism 
between $\cS(\Sigma_{K}^{W})$ and 
$\cS_F(\Sigma_{K_0}^V)$.
\end{lemma}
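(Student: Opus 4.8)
The plan is to deduce Lemma~\ref{schspettri} from the extension of the Schwarz--Mather theorem to Schwartz spaces (Theorem~\ref{Schwarz-Mather}) applied to the finite group $F$ acting linearly on $\cV\cong\R^d$, combined with the restriction lemma (Lemma~\ref{mappapropria2}). The key identity is $\rho(\Sigma_{K_0}^V)=\Sigma_K^W$, already established, together with the fact that $\Sigma_{K_0}^V$ is $F$--invariant. First I would observe that pullback by $\rho$ maps $\cS(\Sigma_K^W)$ into $\cS(\Sigma_{K_0}^V)$, and that its image lands inside the $F$--invariant subspace: since each $\rho_j$ is $F$--invariant, $\rho\circ w=\rho$ for every $w\in F$, so $(f\circ\rho)\circ w=f\circ\rho$. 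Continuity of this pullback is immediate from Lemma~\ref{mappapropria2} once one exhibits a polynomial left inverse to $\rho$ on the relevant sets; alternatively one invokes the $F$--version of Theorem~\ref{Schwarz-Mather} directly.

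The heart of the argument is the construction of a continuous inverse. Given $g\in\cS_F(\Sigma_{K_0}^V)$, I would first extend it to an $F$--invariant Schwartz function on all of $\R^d$. Because $g$ is a restriction of some Schwartz function on $\R^d$ (by definition of $\cS(\Sigma_{K_0}^V)$ as a quotient of $\cS(\R^d)$), and because averaging $\frac1{|F|}\sum_{w\in F}(\cdot)\circ w$ is a continuous projection onto the $F$--invariant Schwartz functions that fixes $g$ on the $F$--invariant set $\Sigma_{K_0}^V$, I obtain a representative in $\cS_F(\R^d)$ whose Schwartz norms are controlled by those of $g$. Then I apply Theorem~\ref{Schwarz-Mather} to the orthogonal $F$--action on $\R^d$ with Hilbert map $\rho$: this produces a continuous linear operator $\mathcal E'\colon\cS_F(\R^d)\to\cS(\R^r)$ with $(\mathcal E' g)\circ\rho=g$ on all of $\R^d$, and in particular on $\Sigma_{K_0}^V$. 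Restricting $\mathcal E' g$ to $\Sigma_K^W=\rho(\Sigma_{K_0}^V)$ gives a well-defined element of $\cS(\Sigma_K^W)$, and the identity $(\mathcal E' g)\circ\rho=g$ shows this is a genuine two-sided inverse to the pullback $f\mapsto f\circ\rho$.

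The two maps are mutually inverse on the nose: pullback followed by $\mathcal E'$-restriction recovers the original function by the defining property of $\mathcal E'$, while the reverse composition equals the identity because any $h\in\cS(\Sigma_K^W)$ satisfies $h=(\text{some Schwartz extension})$ and $\rho$ is surjective onto $\Sigma_K^W$, so $h$ is determined by $h\circ\rho$. Both directions are continuous: the pullback by Lemma~\ref{mappapropria2}, and the inverse by the continuity of $\mathcal E'$ from Theorem~\ref{Schwarz-Mather} together with the continuity of the averaging projection and of restriction. Since all maps respect the quotient Schwartz topologies, the result is a topological isomorphism.

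The main obstacle I anticipate is bookkeeping at the level of the quotient topologies rather than any deep difficulty: one must check that $\mathcal E' g$, a Schwartz function on $\R^r$, has a well-defined and continuous restriction to the closed semi-algebraic set $\Sigma_K^W$, and that this restriction is independent of the chosen $F$--invariant Schwartz extension of $g$. This independence follows because two extensions agree on the $F$--invariant set $\Sigma_{K_0}^V$, hence their $\mathcal E'$-images agree after composing with $\rho$ on $\Sigma_{K_0}^V$, i.e. agree on $\Sigma_K^W$. The only subtlety requiring care is ensuring that Theorem~\ref{Schwarz-Mather}, stated for a general orthogonal compact group action, applies verbatim to the finite group $F$ acting on $\R^d$ via the chosen $F$--invariant inner product on $\cV$; this is immediate since a finite group is a compact Lie group and the action is orthogonal by construction.
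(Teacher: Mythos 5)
Your proposal is correct and follows essentially the same route as the paper's proof: the forward direction by extending to $\cS(\R^r)$ and composing with $\rho$, and the inverse by passing to an $F$--invariant Schwartz extension (your averaging argument is precisely the paper's observation that the infimum of Schwartz norms over all extensions of $g$ agrees, up to constants, with the infimum over its $F$--invariant extensions) and then applying the operator $\cE'$ of Theorem~\ref{Schwarz-Mather} for the finite group $F$ and restricting to $\Sigma_{K}^{W}$. One caveat: your first suggestion for continuity of the pullback --- applying Lemma~\ref{mappapropria2} via a polynomial left inverse of $\rho$ --- cannot work, since $\rho$ identifies $F$--orbits and hence is not injective on $\Sigma_{K_0}^V$ whenever $F$ acts nontrivially on it; but your stated alternative (invoking Theorem~\ref{Schwarz-Mather} directly, whose proof rests on the properness of $\rho$ coming from $|x|^2$ being a polynomial in the $\rho_j$) is exactly what is needed, so this does not affect the validity of the argument.
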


\begin{proof} If $f$ is in $\cS(\Sigma_{K}^{W})$, let $\tilde f$ 
be any Schwartz extension of $f$ to $\R^{r}$. 
Then $g=\tilde f\circ\rho$ is an $F$--invariant Schwartz function on $\R^d$ 
and its restriction to $\Sigma_{K_0}^V$ is  $f\circ\rho$. 
This proves the continuity of the map.

Conversely, given $g$ in $\cS_F(\Sigma_{K_0}^V)$, let $\tilde g$ be an 
$F$--invariant Schwartz extension of $g$ to $\R^d$.
Set 
$h=(\cE'\tilde g)_{|_{\Sigma_{K}^{W}}}$, where $\cE'$ 
is the operator of Theorem~\ref{Schwarz-Mather}
for the group $F$.
The proof that the dependence of $h$ on $g$ is continuous 
is based on the simple observation that, 
for any Schwartz norm $\|\ \|_{(N)}$ on $\R^d$, 
the infimum of the norms of all extensions of $g$ is the same 
as the infimum restricted to its $F$--invariant extensions. 
\end{proof}

We can now prove  Theorem~\ref{main} for general $K$. 
Assume that $K$ is a compact group of automorphisms of $\Hn$ and let $K_0$, $F$, $V_1,\dots, V_d$, $\rho$ be as above. 

Take $f$ in $L^1_K(H_n)$. Denote by $\cG_Vf$ (resp. $\cG_{W}f$) 
its Gelfand tranform as a $K_0$--invariant (resp. $K$--invariant) function. 
Then $\cG_Vf=\cG_{W}f\circ\rho$. In particular, 
a $K_0$--invariant function is $K$--invariant if and only if 
$\cG_Vf$ is $F$--invariant.

If $f$ is in $\cS_K(H_n)$, then $f$ is also $K_0$--invariant and
$\cG_Vf$ is in $\cS_F(\Sigma_{K_0}^V)$ by Theorem~\ref{main2}; 
therefore $\cG_{W}f$ is in $\cS(\Sigma_{K}^{W})$
 by Lemma~\ref{schspettri}.

Conversely, if $\cG_{W}f$ is in $\cS(\Sigma_{K}^{W})$, 
it follows as before that 
$\cG_Vf$ is in $\cS_F(\Sigma_{K_0}^V)$ and therefore $f$ is in $\cS_K(H_n)$
by~(\ref{numero}).


\end{document}